\documentclass[11pt, reqno]{amsart}
\usepackage{amssymb}
\usepackage{latexsym}
\usepackage{amsmath}
\usepackage{amsfonts}
\usepackage[hidelinks]{hyperref}
\hypersetup{
  colorlinks   = true, 
  urlcolor     = blue, 
  linkcolor    = blue, 
  citecolor   = blue 
}
\usepackage{enumitem}
\usepackage{fancyhdr}
\usepackage{cleveref}
\usepackage{mathrsfs}

\usepackage{color}
\theoremstyle{plain}
\newtheorem{theorem}{Theorem}[section]

\newcommand{\R}{\ensuremath{\mathbb{R_+}}}

\newtheorem{lemma}[theorem]{Lemma}

\newtheoremstyle{remark}
    {} 
    {} 
    {}          
    {}          
    {\bfseries} 
    {.}         
    {.5em}      
    {}          

\theoremstyle{remark}
\newtheorem{remark}{Remark}[section]

\newtheoremstyle{example}
    {\dimexpr\topsep/2\relax} 
    {\dimexpr\topsep/2\relax} 
    {}          
    {}          
    {\bfseries} 
    {.}         
    {.5em}      
    {}          

\theoremstyle{example}

\newtheoremstyle{definition}
    {\dimexpr\topsep/2\relax} 
    {\dimexpr\topsep/2\relax} 
    {}          
    {}          
    {\bfseries} 
    {.}         
    {.5em}      
    {}          

\theoremstyle{definition}
\newtheorem{definition}{Definition}[section]

\newtheorem*{similartheorem*}{Theorem \dualnumber{$'$}}

\numberwithin{equation}{section}

\setlength{\oddsidemargin}{0in} \setlength{\evensidemargin}{0in}
\setlength{\textwidth}{6.3in} \setlength{\topmargin}{-0.2in}
\setlength{\textheight}{9in}

\def\R{\mathbb{R}}
\def\H{\mathbb{H}}
\def\C{\mathbb{C}}
\def\Z{\mathbb{Z}}

\allowdisplaybreaks

\begin{document}
\title[Bilinear fractional integral operator on the Heisenberg group]{Weighted estimates for bilinear fractional integral operator on the Heisenberg group}

\keywords{Bilinear fractional operator, Heisenberg group, Weights}
{\let\thefootnote\relax\footnote{\noindent 2010 {\it Mathematics Subject Classification.} Primary 42B20, 26A33}}

\thanks{The first author is supported by Tata Institute of Fundamental Research, Centre for Applicable Mathematics, Bangalore.}
\thanks{The second author is supported by NBHM, Government of India.}

\author{Abhishek Ghosh and Rajesh K. Singh}
\address[Abhishek Ghosh]{Tata Institute of Fundamental Research, Centre for Applicable Mathematics, Bangalore--560065, Karnataka, India.}
\email{abhi170791@gmail.com, abhi21@tifrbng.res.in}

\address[Rajesh K. Singh]{Department of Mathematics, Indian Institute of Science, Bangalore-560012, Karnataka, India.}
\email{agsinghraj@gmail.com, rajeshsingh@iisc.ac.in}

\pagestyle{headings}

\begin{abstract}
In this article, we introduce an analogue of Kenig and Stein's bilinear fractional integral operator on the Heisenberg group $\mathbb{H}^n$. We completely characterize exponents $\alpha, \beta$ and $\gamma$ such that the operator is bounded from $L^{p}(\mathbb{H}^n, |x|^{\alpha p})\times L^{q}(\mathbb{H}^n, |x|^{\beta q})$ to $L^{r}(\mathbb{H}^n, |x|^{-\gamma r})$.
\end{abstract}

\maketitle

\section{Introduction and preliminaries}\label{Introduction}
Fractional integral operators are classical objects in analysis pertaining to the study of smoothness of functions, potential theory and embedding theorems. Recall that for $0<\lambda<n$, the fractional integral operator is defined as follows
\begin{equation*}
    I_{\lambda}f(x)=\int_{\mathbb{R}^n}\frac{f(y)}{|x-y|^{n-\lambda}}\ dy, \ \ \ x \in \R^{n}.
\end{equation*}
The operators $I_{\lambda}$ are bounded off-diagonally and the characterization of weights for which $I_{\lambda}: L^p(w^{p})\mapsto L^q(w^q),$ with ${1}/{q}={1}/{p}-{\lambda}/{n},\,\,1<p<n/\lambda,$  was obtained by Muckenhoupt and Wheeden in \cite{MW-fractional}. The appropriate class of weights are denoted as $A_{p, q}$ weights.
 The operator $I_\lambda$ and its analogues are also investigated beyond the Euclidean setting. 
 
 In this article we are interested in  bilinear analogue of $I_{\lambda}$ on the Heisenberg group $\mathbb{H}^n$. Let us begin with the bilinear fractional integral operator $BI_{\lambda}$ on $\mathbb{R}^n$  defined as 
\begin{align*}
BI_{\lambda}(f, g)(x)=\int_{\mathbb{R}^n}\frac{f(x-y)g(x+y)}{|y|^{n-\lambda}}\, dy, \, \,\, 0<\lambda<n.   
\end{align*}
These operators are well studied, for example we refer the works \cite{Grafakos-1992, Kenig-Stein-MRL, Kabe-TAMS}. They are also of interest due to their connections with the bilinear Hilbert transform of Lacey and Thiele (see \cite{Lacey-Thiele}). It was proved in \cite{Kenig-Stein-MRL} that $BI_{\lambda}$ bounded from  $L^{p}(\mathbb{R}^n)\times L^{q}(\mathbb{R}^n)$ to $L^{r}(\mathbb{R}^n)$ provided  $1/r=1/p+1/q-\lambda/n>0$ and $1<p, q\leq \infty$,  and also the expected weak type inequality holds if either $p$ or $q$ is $1$. It is not difficult to see that using H\"older's inequality and weighted boundedness of $I_{\lambda}$, we can obtain that $BI_{\lambda}: L^{p}(w_1^{p})\times L^{q}(w_1^{q})\mapsto L^{r}(w_{1}^r w_{2}^{r})$ provided $1/r=1/p+1/q-\lambda/n,$ $1<r, s<\infty$ and $w_{i}^{p/s}\in A_{p, q}$, where $1/s=1/p+1/q$. However, the above approach is not useful when $r<1$ and it was also pointed out in the influential work of Lerner \textit{et al} that linear Muckenhoupt classes are not the appropriate weights while studying bilinear operators. In \cite{Lerner-Adv-Multi-Weight}, multilinear $\mathcal{A}_{\vec{P}}$ weights are introduced in connection with the multilinear Hardy--Littlewood maximal operator and multilinear Calder\'on--Zygmund operators. Subsequently, Kabe Moen has initiated the study of fractional multilinear weights and proved the following: $BI_{\lambda}$ maps $L^{p}(w_1^{p})\times L^{q}(w_2^{q})$ to $L^r(w_1^r w_2^r)$ boundedly, when $1<p, q<\infty$, ${1}/{r}={1}/{p}+1/q-\lambda/n>1,$ and $\vec{w}\in \mathcal{A}_{p, q, r}$, where the the class $ \mathcal{A}_{p, q, r}$ is defined as follows.  We say $\vec{w}=(w_{1}, w_{2})\in \mathcal{A}_{p, q, r}$ if
\begin{align*}
\sup_{Q}\left(\frac{1}{|Q|}\int_{Q}(w_1 w_{2})^{r/(1-r)}\right)^{(1-r)/r}\left(\frac{1}{|Q|}\int_{Q} w_{1}^{-p'}\right)^{1/p'}\left(\frac{1}{|Q|}\int_{Q} w_{2}^{-q'}\right)^{1/q'}\lesssim C<\infty,
\end{align*}
where the supremum is over all cubes with sides parallel to the coordinate axes. Though it is not yet known whether the condition $\mathcal{A}_{p, q, r}$ is also necessary for the boundedness of $BI_{\alpha}$. Interestingly, if we only consider power weights then it was shown in \cite{Komori-2020} that it is possible to obtain both necessary and sufficient conditions on $\alpha, \beta,$ and $\gamma$ such that $BI_{\lambda}$ is bounded from  $L^{p}(|x|^{\alpha p})\times L^{q}(|x|^{\beta q})$ to $L^{r}(|x|^{-\gamma r})$, in the particular case when $\gamma$ satisfies $\gamma=-\alpha-\beta$. Our primary goal in this article is to obtain a complete characterization of $\alpha, \beta,$ and $\gamma$ in full generality such that the bilinear fractional operator $B_{\lambda}$ maps $L^{p}(|x|^{\alpha p})\times L^{q}(|x|^{\beta q})$ to $L^{r}(|x|^{-\gamma r})$ on the Heisenberg group $\mathbb{H}^n$. We write our results on the Heisenberg group but it is not difficult to see that the same ideas also work for the Euclidean case, which in turn improves all the existing results in the Euclidean setting. We will explain it in detail in Remark~\ref{comparison}.

To illustrate our results, let us recall the following preliminaries. Let $\mathbb{H}^n:= \mathbb{C}^n \times \mathbb{R}$ denotes the $(2n+1)$-dimensional Heisenberg group with the group law
\begin{equation}
(z,t) \cdot (w,s) = \left(  z + w, t + s + \frac{1}{2}  \Im(  z \cdot \bar{w}) \right),\ \ \text{for all} \ \ (z,t), (w,s) \in \mathbb{H}^n.
\end{equation} 
We have a family of non-isotropic dilations defined by $\delta_{r}(z,t):=(rz,r^2t)$, for all $(z,t) \in \mathbb{H}^n$, for every $r>0$. The Koranyi norm on $\mathbb{H}^n$ is defined by
 \begin{equation*}
     |(z,t)|:= \left( |z|^4 + t^2 \right)^{\frac{1}{4}}, \ \ \    (z,t) \in \mathbb{H}^n, 
 \end{equation*}
which is homogeneous of degree 1, that is $|\delta_{r}(z,t)|= r \,  |(z,t)|$. The Haar measure on $\mathbb{H}^n$ coincides with the Lebesgue measure $dz dt$. Let $B(0,r)=\{ (z,t) \in \H^n : |(z,t)| < r \}$ be the ball of radius $r$ with respect to Koranyi norm. One has its measure $|B(0,r)| = C_{Q} \, r^{Q}$, where $Q=(2n + 2)$ is known as the homogeneous dimension of $\H^n$. The convolution of $f$ with $g$ on $\H^{n}$ is defined by
\begin{equation*}
    f * g \, (x) = \int_{\H^{n}}  f(x y^{-1}) g(y) dy, \ \ \ x \in \H^{n}.
\end{equation*}

Recall that the fractional integral operator, $\mathcal{I}_{\lambda}$, on the Heisenberg group $\mathbb{H}^n$ is defined as follows $$\mathcal{I}_{\lambda}f(x)=\int_{\mathbb{H}^n}f(xy^{-1})\,~~\frac{dy}{|y|^{Q-\lambda}}, \ \ \ 0<\lambda<Q.$$ Fractional integral operators on the Heisenberg group has a long history, starting with the foundational work of Folland and Stein in \cite{Folland-Stein}, where it was shown that $\mathcal{I}_{\lambda}: L^p(\mathbb{H}^n)\mapsto L^{q}(\mathbb{H}^n)$ with $1/q=1/p-\lambda/Q, 1<p<Q/\lambda$, also the natural end-point boundedness $\mathcal{I}_{\lambda}: L^1(\mathbb{H}^n)\mapsto L^{Q/(Q-\lambda), \infty}(\mathbb{H}^n)$. We would like to mention the work \cite{Kairema-homogeneous} where fractional integral operators are extended in the more general context of spaces of homogeneous type, and also the article \cite{Fractional-Lie-Group} where the authors have treated an analogue of Kenig and Stein's bilinear fractional integral operator on compact Lie groups. Motivated by the above discussion, let us define bilinear fractional integral operator on $\mathbb{H}^n$.
\begin{definition}
For $0<\lambda<Q$, the bilinear fractional integral operator $B_{\lambda}$ on $\mathbb{H}^n$ is defined as follows
\begin{equation*}
B_{\lambda}(f,g)(x)= \int_{\mathbb{H}^n}  f(x y^{-1}) \, g(xy)  \, \frac{dy}{|y|^{Q-\lambda}}.
\end{equation*}
\end{definition}

Our first result addresses the unweighted boundedness of the operator $B_{\lambda}$ on $\mathbb{H}^n$, thus extending the result of Kenig and Stein to the Heisenberg group.
\begin{theorem}[Unweighted boundedness]\label{Kenig Stein Hn}
Let $0<\lambda<Q$, $\frac{1}{r}= \frac{1}{p} + \frac{1}{q}- \frac{\lambda}{Q} >0$, and that $f \in L^p(\H^n)$, $g \in L^q(\H^n)$, $1 \leq p,q \leq \infty$. Then,
\begin{enumerate}
\item [\emph{(\emph{a})}]  If $1<p,q \leq \infty$,
\begin{equation}
\label{B alpha bddness unwtd}
\| \,  B_{\lambda}(f,g) \|_{L^{r}(\mathbb{H}^n  )} \leq K  \, \|  f \|_{L^p( \mathbb{H}^n  )} \| g \|_{L^q( \mathbb{H}^n  )},
\end{equation}
\item [\emph{(\emph{b})}] If $1 \leq p,q \leq \infty$, with either $p=1$ or $q=1$,
\begin{equation}
\label{B alpha weak bddness unwtd}
\| \,  B_{\lambda}(f,g) \|_{L^{r, \infty}(\mathbb{H}^n  )} \leq K  \, \|  f \|_{L^p( \mathbb{H}^n  )} \| g \|_{L^q( \mathbb{H}^n  )}.
\end{equation}
\end{enumerate}
\end{theorem}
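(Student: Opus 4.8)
The plan is to reduce the bilinear estimate to the linear theorem of Folland and Stein wherever the operator factorises, and to treat the genuinely bilinear range by interpolation. I would first reduce to $f,g\ge 0$ and record the identity that produces the linear operator: since the Koranyi norm is inversion invariant, $|y^{-1}|=|y|$, and $\H^n$ is unimodular, the substitution $y\mapsto y^{-1}$ gives $\int_{\H^n} g(xy)\,|y|^{\lambda-Q}\,dy=\int_{\H^n} g(xy^{-1})\,|y|^{\lambda-Q}\,dy=\mathcal{I}_{\lambda}g(x)$, and likewise $\int_{\H^n} f(xy^{-1})\,|y|^{\lambda-Q}\,dy=\mathcal{I}_{\lambda}f(x)$. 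In particular $B_{\lambda}(f,g)(x)\le\|f\|_{\infty}\,\mathcal{I}_{\lambda}g(x)$ and $B_{\lambda}(f,g)(x)\le\|g\|_{\infty}\,\mathcal{I}_{\lambda}f(x)$, so the Folland--Stein boundedness of $\mathcal{I}_{\lambda}$ immediately yields the vertex estimates in which one exponent is $\infty$: the strong bounds $L^{\infty}\times L^{q}\to L^{r}$ and $L^{p}\times L^{\infty}\to L^{r}$ for $1<p,q<Q/\lambda$, together with the weak endpoints $L^{\infty}\times L^{1}\to L^{Q/(Q-\lambda),\infty}$ and $L^{1}\times L^{\infty}\to L^{Q/(Q-\lambda),\infty}$ for part (b).

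For the range $1/p+1/q<1$ I would argue by a pointwise H\"older splitting of the kernel. Choosing conjugate exponents $p_{0},p_{0}'$ with $p_{0}<p$ and $p_{0}'<q$ (which is possible precisely when $1/p+1/q<1$) and applying H\"older to the $y$-integral against the measure $|y|^{\lambda-Q}\,dy$ gives the pointwise bound $B_{\lambda}(f,g)(x)\le(\mathcal{I}_{\lambda}(f^{p_{0}})(x))^{1/p_{0}}\,(\mathcal{I}_{\lambda}(g^{p_{0}'})(x))^{1/p_{0}'}$. Now H\"older in $x$ followed by the Folland--Stein estimates for $\mathcal{I}_{\lambda}$ on $L^{p/p_{0}}$ and $L^{q/p_{0}'}$ yields $\|B_{\lambda}(f,g)\|_{L^{r}}\lesssim\|f\|_{L^{p}}\|g\|_{L^{q}}$; a direct bookkeeping of the target exponents shows that their reciprocals add up to exactly $1/r=1/p+1/q-\lambda/Q$. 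This argument uses nothing beyond the linear theorem and H\"older's inequality, and it already establishes part (a) on the entire region $1/p+1/q<1$, including cases with $r<1$.

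The remaining, genuinely bilinear, range $1/p+1/q\ge 1$ is where this factorisation breaks down, since no admissible $p_{0}$ exists, and this is the main obstacle. In the coordinates $(u,v)=(1/p,1/q)$ it is the triangle with vertices $(1,0),(0,1),(1,1)$, whose two lower vertices are covered by the weak endpoints of the first paragraph, so everything is reduced to the single corner estimate $B_{\lambda}\colon L^{1}\times L^{1}\to L^{Q/(2Q-\lambda),\infty}$. I expect this bilinear weak-type bound to be the crux of the whole theorem: there $B_{\lambda}$ genuinely fails to split into linear fractional integrals, and I would instead derive it from the weak endpoint of the associated bilinear maximal operator $\mathcal{M}(f,g)(x)=\sup_{\rho>0}|B(0,\rho)|^{-1}\int_{|y|<\rho}|f(xy^{-1})g(xy)|\,dy$ on $\H^n$, proved through a Vitali-type covering lemma, combined with a truncation/optimisation scheme that trades the local part of the kernel against $\mathcal{M}(f,g)$ and the tail against the $L^{p}\times L^{q}$ norms. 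Once the corner estimate and the two vertex estimates are in hand, multilinear Marcinkiewicz interpolation delivers the strong bound of part (a) throughout the open interior $\{u,v<1,\;u+v\ge 1\}$ and the weak bounds of part (b) along the edges $p=1$ and $q=1$. The delicate behaviour is concentrated at the corner $p=q=1$, and that is the step I would allocate the most care to.
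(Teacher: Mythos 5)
Your first two steps are sound: the vertex estimates via the pointwise domination $B_{\lambda}(f,g)\le\|f\|_{\infty}\,\mathcal{I}_{\lambda}g$ (and symmetrically) are exactly how the paper handles the exponent-$\infty$ cases, and your H\"older splitting $B_{\lambda}(f,g)\le(\mathcal{I}_{\lambda}(f^{p_0}))^{1/p_0}(\mathcal{I}_{\lambda}(g^{p_0'}))^{1/p_0'}$ is a correct, genuinely different way to obtain part (a) in the sub-Banach region $1/p+1/q<1$ (the paper instead gets all of part (a) at once by bilinear interpolation from the weak-type estimates; note, though, that in that region one automatically has $r>1$, so your parenthetical ``including cases with $r<1$'' is vacuous there). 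You have also correctly located the crux at the corner estimate $L^{1}\times L^{1}\to L^{Q/(2Q-\lambda),\infty}$.

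The gap is in how you propose to prove that corner estimate. The weak endpoint $L^{1}\times L^{1}\to L^{1/2,\infty}$ for the correlated bilinear maximal operator $\mathcal{M}(f,g)(x)=\sup_{\rho>0}|B(0,\rho)|^{-1}\int_{|y|<\rho}|f(xy^{-1})g(xy)|\,dy$ is not an available tool, and a Vitali-type covering argument will not produce it: every elementary bound for $\mathcal{M}$ passes through H\"older on the $y$-integral, giving $\mathcal{M}(f,g)\le(M(|f|^{p_0}))^{1/p_0}(M(|g|^{p_0'}))^{1/p_0'}$, which requires $p_0\le p$ and $p_0'\le q$ and is therefore impossible at $p=q=1$. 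Even in the Euclidean one-dimensional setting, boundedness of this maximal operator below $r=1$ is Lacey's time-frequency theorem, which moreover requires $p,q>1$ and $r>2/3$; the $(1,1,1/2)$ endpoint is not known. Fortunately no maximal operator is needed. The paper (following Kenig--Stein) proves the single-scale estimates $\|B_{k}(f,g)\|_{L^{1/2}}\lesssim 2^{-kQ}\|f\|_{1}\|g\|_{1}$ and $\|B_{k}(f,g)\|_{L^{1}}\lesssim\|f\|_{1}\|g\|_{1}$ for the annular pieces $B_{k}(f,g)(x)=\int_{|y|\simeq 2^{-k}}f(xy^{-1})g(xy)\,dy$ --- this is where the Heisenberg structure actually enters, via a tiling of $\H^{n}$ by cubes $Q_{a}=a\cdot Q_{0}$, $a\in\Z^{2n+1}$, with bounded overlap and the identity $x\cdot\xi\cdot x=2x+\xi$ --- and then writes $B_{\lambda}\simeq\sum_{k}2^{k(Q-\lambda)}B_{k}$, splits the sum at a threshold $k_{0}$, bounds the low-frequency half in $L^{1}$ and the high-frequency half in $L^{1/2}$, and optimizes over $k_{0}$. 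The factors $2^{k(Q-\lambda)}$ and $2^{-kQ/2}$ make both geometric series converge, so the fractional kernel itself supplies the summability you were hoping to extract from a maximal function; you would need to replace your maximal-function step by this (or an equivalent) single-scale-plus-optimization argument.
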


Now, we present the main result of this article concerning the characterization of power weights for the boundedness of $B_{\lambda}$. Precisely, we obtain the following:

\begin{theorem}[Characterization of power weights]\label{Power weights C}
Let $1<p,q<\infty$, $0<r<\infty$ and $\frac{1}{r} \leq \frac{1}{p} + \frac{1}{q}$. Let  $0< \lambda <Q$  and  
\begin{equation}\label{Easy ABC}
\alpha < \frac{Q}{p'}, \ \ \ \beta < \frac{Q}{q'} \ \ \  \text{and}   \ \ \ \gamma < \frac{Q}{r}.
\end{equation}
Further, assume $\frac{1}{r}= \frac{1}{p} + \frac{1}{q}- \frac{\lambda - \alpha - \beta -\gamma}{Q} >0$.

Then, the following are equivalent:
\begin{enumerate}
\setlength\itemsep{.5em}
\item [\emph{(\emph{a})}] There exists a constant $K>0$, such that
\begin{equation}
\label{suff-cond-ABC}
\| \, |x|^{-\gamma} B_{\lambda}(f,g) \|_{L^{r}(\mathbb{H}^n  )} \leq K  \| |x|^{\alpha} f \|_{L^p( \mathbb{H}^n  )} \| |x|^{\beta}g \|_{L^q( \mathbb{H}^n  )},
\end{equation}
for all $f \in L^{p}(\mathbb{H}^n)$ and $g \in L^{q}(\mathbb{H}^n)$;
\item [\emph{(\emph{b})}] The exponents $\alpha, \beta$ and $\gamma$ satisfy

\begin{equation}\label{Necessary-condn-ABC}
\textnormal{(I)} \  -Q + \lambda \leq  \beta + \gamma, \ \ \  \textnormal{(II)} \  -Q + \lambda \leq \gamma +  \alpha, \ \ \ \textnormal{(III)} \  -Q + \lambda \leq \alpha + \beta,  \  \  and \ \ \textnormal{(IV)} \  \alpha+\beta+\gamma \geq 0.
\end{equation}

\end{enumerate}
\end{theorem}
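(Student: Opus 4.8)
The plan is to prove the equivalence by establishing the two implications separately, the sufficiency $(b)\Rightarrow(a)$ being the substantial one. Throughout I would first absorb the weights into the functions: setting $F=|\cdot|^{\alpha}f$, $G=|\cdot|^{\beta}g$ and using that one may take $f,g\ge 0$, the estimate \eqref{suff-cond-ABC} becomes the $L^p\times L^q\to L^r$ boundedness of the positive operator
\begin{equation*}
T(F,G)(x)=|x|^{-\gamma}\int_{\mathbb{H}^n} |xy^{-1}|^{-\alpha}\,|xy|^{-\beta}\,F(xy^{-1})\,G(xy)\,\frac{dy}{|y|^{Q-\lambda}},
\end{equation*}
so the whole question reduces to controlling the single power-type kernel $|x|^{-\gamma}|xy^{-1}|^{-\alpha}|xy|^{-\beta}|y|^{\lambda-Q}$. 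Since $r$ may be smaller than $1$, duality is unavailable, so I would work directly with $B_\lambda(f,g)$.

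For sufficiency I would split the $y$-integral, for each fixed $x$, into three ranges according to whether $|y|$ is much smaller than, comparable to, or much larger than $|x|$, using only the quasi-triangle inequality for the Koranyi norm (there is no clean Euclidean $x\pm y$ substitution available here). On the main range $|y|\lesssim|x|$ one has $|xy^{-1}|\approx|xy|\approx|x|$, so all three weights collapse to the factor $|x|^{-(\alpha+\beta+\gamma)}$, which is bounded precisely by condition (IV). I would then restrict $F,G$ to the dyadic Koranyi annulus $\{|\cdot|\approx 2^{i}\}$ on which $x$ lives, apply the unweighted bilinear estimate of Theorem~\ref{Kenig Stein Hn} (giving the $L^{r_0}$ bound with $1/r_0=1/p+1/q-\lambda/Q$) on each annulus, and pass from $L^{r_0}$ to $L^r$ by H\"older on the annulus of measure $\approx 2^{iQ}$ (licensed by $r\le r_0$, i.e.\ again by (IV)). The cost $2^{iQ(1/r-1/r_0)}=2^{i(\alpha+\beta+\gamma)}$ cancels the weight factor $2^{-i(\alpha+\beta+\gamma)}$ exactly — this cancellation is nothing but the scaling identity $\frac1r=\frac1p+\frac1q-\frac{\lambda-\alpha-\beta-\gamma}{Q}$ — and the resulting scale-invariant per-annulus bounds are summed using $\frac1r\le\frac1p+\frac1q$, which is exactly the hypothesis that makes the sequence-space H\"older converge. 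On the range $|y|\gtrsim|x|$ one has $|xy^{-1}|\approx|xy|\approx|y|$, the operator degenerates into a linear Stein--Weiss-type fractional integral in a single variable, and its boundedness is governed by the remaining conditions (I)--(III); the strict inequalities in \eqref{Easy ABC} guarantee the local integrability of the weights that these estimates require.

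For necessity I would feed carefully chosen test functions into \eqref{suff-cond-ABC}. Condition (IV) follows cleanly from the left-translation invariance $B_\lambda(\tau_a f,\tau_a g)=\tau_a B_\lambda(f,g)$: taking fixed bumps near the identity, translating them to a point at Koranyi distance $N$, and using that $B_\lambda$ is locally nondegenerate, the two sides of \eqref{suff-cond-ABC} scale like $N^{-\gamma}$ and $N^{\alpha+\beta}$ respectively, forcing $\alpha+\beta+\gamma\ge 0$ as $N\to\infty$. Conditions (I)--(III) are the three symmetric Stein--Weiss-type constraints among the weight exponents; I would extract each by testing with power-type functions truncated to large annuli $\{1\le|\cdot|\le R\}$ (and their origin-concentrated counterparts), choosing the powers critically and letting $R\to\infty$, so that each of (I), (II), (III) is precisely the condition keeping the ratio of the two sides bounded.

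The main obstacle I anticipate is the sufficiency in the transition regime $|x|\approx|y|$, where one of the translates $xy^{-1}$, $xy$ may be much smaller than $|x|$ even though $|y|\approx|x|$: here the weights do not collapse to a single power, and the naive idea of pulling the weight out and invoking Theorem~\ref{Kenig Stein Hn} fails, since the global weighted embedding from $L^{r_0}$ into $L^{r}$ against $|x|^{-(\alpha+\beta+\gamma)}$ is false. This is what forces the annulus-by-annulus argument above together with a genuine fractional-integral estimate in the small-translate variable, and it is exactly the endpoint cases — where one of (I)--(IV) holds with equality — that make the summation delicate and where the strict constraints \eqref{Easy ABC} must be invoked.
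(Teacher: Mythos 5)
Your overall architecture (reduce to the single operator $S_{\alpha,\beta,\gamma}$ with kernel $|x|^{-\gamma}|xy^{-1}|^{-\alpha}|xy|^{-\beta}|y|^{\lambda-Q}$, split the $y$-integral relative to $|x|$, invoke the unweighted theorem, test functions for necessity) matches the paper's, and your translation-invariance argument for the necessity of (IV) is correct and in fact cleaner than the paper's lattice-sum construction. But there are two genuine gaps. First, in the sufficiency direction you explicitly identify the singular regions --- where $xy^{-1}$ or $xy$ lies near the origin while $|y|\simeq|x|$ --- as ``the main obstacle'' and then do not resolve it; this is precisely where the substance of the proof lies. The paper isolates these as the pieces $\mathcal{J}_1,\mathcal{J}_2$ (integration over $y\in B(0,|x|/2)\cdot x$ and $y\in x^{-1}\cdot B(0,|x|/2)$), and handles them by inserting an auxiliary exponent $\mu$ chosen in the nonempty interval $(\alpha,\alpha+\beta+\gamma+Q-\lambda)\cap(Q(1-\tfrac1p-\tfrac1q),Q/p')$, using the identity $x\cdot\xi\cdot x=2x+\xi$ to convert the group integral into a Euclidean convolution $\bigl(\tfrac{f}{|\cdot|^{\mu}}*_{e}g\bigr)(2x)$, and applying weak-type Young and weak-type H\"older inequalities; the equality case $\beta+\gamma=-Q+\lambda$ needs a separate argument. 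This is where the hypotheses $\alpha<Q/p'$, $\beta<Q/q'$ and conditions (I), (II) are actually consumed, and it also forces the proof to go through weak-type estimates plus bilinear (Janson) interpolation rather than the direct strong-type summation you propose.

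Second, your plan for the necessity of (I)--(III) via ``power-type functions truncated to large annuli'' cannot produce the sharp conditions. Because the integrand contains the product $f(xy^{-1})g(xy)$, both arguments must lie in the supports simultaneously; with radial powers on an annulus $\{1\le|\cdot|\le R\}$ the divergence criterion you obtain as $R\to\infty$ involves $\alpha+\beta$ compared with $\lambda-Q(\tfrac1p+\tfrac1q)$, which is strictly weaker than (III) whenever $\tfrac1p+\tfrac1q>1$. The paper's constructions are genuinely two-parameter: sums $\sum_a |a|^{M/p}\chi_{E_a}$ of bumps on shrinking sets $E_a=2a+Q(0,|a|^{-N-1})$ placed at lattice points (exploiting again $a\cdot\xi\cdot a=2a+\xi$ so that $xy^{-1}$ and $xy$ can be steered into the same $E_a$), with $M$ pushed up to $QN$ and $N\to\infty$ extracting exactly $-Q+\lambda\le\alpha+\beta$; the proofs of (I) and (II) combine an origin-concentrated $f=|y|^{-s}\chi_{Q(0,1)}$ with $s\uparrow Q/p$ against such a lattice sum for $g$. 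Without this concentration-plus-amplitude optimization your test functions only certify strictly weaker inequalities, so the equivalence is not established.
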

\medskip
The proof of the above theorem is quite involved. Since the operator $B_\lambda$ involves the product of the form $f(xy^{-1}) g(xy)$, in order to obtain the necessary conditions we need to construct functions $f$ and $g$ with very delicate precision. In the following remark we would like to mention some key features of Theorem~\ref{Power weights C}.
\begin{remark}
\label{comparison}
We point out that our characterization, that is, Theorem~\ref{Power weights C} is sharp. Moreover, if restricted to the Euclidean setting, it improves all the previously known results. We mention some of them here.
\begin{itemize}
\item
Recall that the condition obtained in Theorem~2 in \cite{Komori-2020} were $\alpha\leq n-\lambda, \beta\leq n-\lambda,$ and $-n+\lambda\leq \alpha+\beta$ which certainly implies our conditions. Also it is easy to see that by suitably choosing $\epsilon>0$ we can construct triplets $\alpha=n-\lambda+\epsilon, \beta=0, \gamma=-n+\lambda$ which satisfy conditions of Theorem~\ref{Power weights C} but it is not covered by Theorem~2 in \cite{Komori-2020}.
\item
A simple computation shows that our result also recovers the well-known result of Hoang and Moen, see Theorem~10.1 in \cite{Moen-Hoang}. 
\item
In \cite{Brascamp-Lieb-PAMS}, as an application of Brascamp-Lieb forms, the authors have obtained boundedness  \eqref{suff-cond-ABC} when one has (\ref{Necessary-condn-ABC}) with  strict inequalities in the first three conditions and the $r$ is additionally restricted by the requirement $1<r<\infty$. The limitation $1<r<\infty$ is due to use of duality arguments in their proof.
\end{itemize}
\end{remark}

Another interesting aspect related to the study of fractional integral operator is the  Stein-Weiss inequality obtained in 1958 by Stein and Weiss in \cite{Stein-Weiss}. We recall it here. The inequality
\begin{align}
\label{Stein-Weiss-Linear}
\Bigg|\int_{\mathbb{R}^n}\int_{\mathbb{R}^n}\frac{\overline{f(x)}g(y)}{|x|^{\alpha}|x-y|^{\lambda}|y|^{\beta}}\, dx\, dy\Bigg|\lesssim \|f\|_{L^p(\mathbb{R}^n)}\|g\|_{L^q(\mathbb{R}^n)}, 
\end{align}
holds, where $1<p, q<\infty$, $0<\lambda<n$, $\alpha +\beta\geq 0$,  $1/p+1/q+(\alpha+\beta+\lambda)/n=2$ with  $\alpha <n/p'$, $ \beta<n/q'$. This was extended to the Heisenberg group in \cite{Fractional-Heisenberg-Nonlinear}. In this article, we will also prove an analogue of \eqref{Stein-Weiss-Linear} for the bilinear operator $B_{\lambda}$ on the Heisenberg group $\mathbb{H}^n$, see Theorem~\ref{Power weights AB}. We end this section with  the following bilinear interpolation theorem for Lorentz spaces which will be very useful for our purpose.
\begin{theorem}[\cite{Janson-interpolation}, Theorem~3 \cite{Kenig-Stein-MRL} ]  \label{S.  Janson}
Suppose that a bilinear operator $T : L^{p_{i},1} \times L^{q_{i},1} \rightarrow L^{r_{i}, \infty}$, where $0 < p_{i}, q_{i} \leq \infty$, $0 < r_{i} \leq \infty$, for three points $ \left( \frac{1}{p_{i}}, \frac{1}{q_{i}}   \right)$, $i=1,2,3$ in $\R^{2}$, that are non-collinear.
Suppose, further, that there are $\theta_{0}, \theta_{1}, \theta_{2} \in \R $ with $\theta_{1}, \theta_{2} >0$ so that $\frac{1}{r_{i}} = \theta_{0} + \frac{ \theta_{1} }{ p_{i}} + \frac{ \theta_{2} }{ q_{i}}$, $i=1,2,3$. Then,
\begin{equation*}
    T : L^{p} \times L^{q} \rightarrow L^{r},
\end{equation*}
provided $1 \leq p, q \leq \infty$, $ \frac{1}{p} + \frac{1}{q} \geq \frac{1}{r}  $ and  $ \left( \frac{1}{p}, \frac{1}{q} , \frac{1}{r}  \right)$   lies in the open convex hull of $ \left( \frac{1}{p_{i}}, \frac{1}{q_{i}} , \frac{1}{r_{i}}  \right)$.
\end{theorem}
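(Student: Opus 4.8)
The plan is to prove Theorem~\ref{S.  Janson} as a genuinely two‑parameter (planar) Marcinkiewicz‑type interpolation theorem: I would decompose the two inputs into dyadic layers, feed the resulting bilinear layer‑products into the three vertex hypotheses, and assemble the pieces by an off‑diagonal multilinear Marcinkiewicz summation. The convenient reformulation of the hypothesis is its action on characteristic functions, namely $\|T(\chi_E,\chi_F)\|_{L^{r_i,\infty}}\lesssim |E|^{1/p_i}|F|^{1/q_i}$ for $i=1,2,3$, equivalently $\|T(f,g)\|_{L^{r_i,\infty}}\lesssim\|f\|_{L^{p_i,1}}\|g\|_{L^{q_i,1}}$; these are exactly the bounds I will apply to the single‑layer pieces, since a normalized layer lies in $L^{p_i,1}$ with an explicitly computable norm.

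First I would fix $f\in L^p$, $g\in L^q$ and carry out the layer‑cake decomposition $f=\sum_{j\in\Z}f_j$ with $f_j=f\,\chi_{\{2^j\le|f|<2^{j+1}\}}$, so that $f_j$ is comparable to $2^j\chi_{E_j}$ on $E_j=\{2^j\le|f|<2^{j+1}\}$ and $\sum_j 2^{jp}|E_j|\sim\|f\|_p^p$; likewise $g=\sum_k g_k$ with sets $F_k$. By bilinearity $T(f,g)=\sum_{j,k}T(f_j,g_k)$, and each vertex hypothesis yields $\|T(f_j,g_k)\|_{L^{r_i,\infty}}\lesssim 2^{j}|E_j|^{1/p_i}\,2^{k}|F_k|^{1/q_i}$. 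Because the target $(\tfrac1p,\tfrac1q,\tfrac1r)$ lies in the affine plane through the vertices, the relation $\tfrac1r=\theta_0+\theta_1/p+\theta_2/q$ also holds. The core of the argument is to bound the distribution function of $\sum_{j,k}T(f_j,g_k)$ at height $\mu$ by splitting $\mu$ among the blocks $(j,k)$ and, for each block, selecting the vertex $i=i(j,k)$ whose weak estimate is most efficient. Non‑collinearity of $(\tfrac1{p_i},\tfrac1{q_i})$ guarantees that the three functionals $(j,k)\mapsto j/p_i+k/q_i$ span enough directions that, for a target in the \emph{open} convex hull, the chosen bounds decay geometrically away from a diagonal, so the double sum (and the subsequent sum over output scales) converges; the positivity $\theta_1,\theta_2>0$ fixes the orientation of this gain, and the constraint $\tfrac1p+\tfrac1q\ge\tfrac1r$ is what ensures the assembled bound is of strong type $L^p\times L^q\to L^r$ rather than merely weak type.

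I expect the summation step to be the main obstacle. One must organize a triple optimization — over the layer indices $(j,k)$, over the partition of the threshold $\mu$, and over the choice of interpolation vertex — so that the resulting series sums to the \emph{strong} $L^r$ norm rather than only to $L^{r,\infty}$, and one must do this uniformly in the two regimes $r\ge 1$, where Minkowski's inequality is available, and $0<r<1$, where one instead exploits the $r$‑subadditivity $\|\sum h_{j,k}\|_{L^r}^r\le\sum\|h_{j,k}\|_{L^r}^r$. The quantitative gain that powers this summation is precisely the strict separation furnished by the target's lying in the \emph{open} hull of three \emph{non‑collinear} points; the degenerate cases (collinear vertices, or a target on the boundary) are exactly where the geometric decay is lost, which is why those are excluded. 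A more structural alternative is to phrase the statement within the real interpolation method and realize the planar interpolation as an iterated two‑dimensional $K$‑functional argument in the spirit of Janson; this repackages, but does not remove, the same multilinear Marcinkiewicz bookkeeping.
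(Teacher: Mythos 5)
The paper does not prove this statement at all: Theorem~\ref{S.  Janson} is imported as a black box from Janson's paper and from Theorem~3 of Kenig--Stein, so there is no in-paper argument to compare yours against. Your outline does follow the route those references actually take --- dyadic layer decomposition $f=\sum_j f_j$, $g=\sum_k g_k$ with $f_j\sim 2^j\chi_{E_j}$, the restricted weak-type vertex bounds $\|T(f_j,g_k)\|_{L^{r_i,\infty}}\lesssim 2^j|E_j|^{1/p_i}2^k|F_k|^{1/q_i}$, a block-by-block choice of vertex, and a splitting of the threshold $\mu$ across blocks --- so the strategy is the correct one.

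As a proof, however, it is incomplete precisely at the point you yourself flag as ``the main obstacle.'' What is missing is the actual content of the theorem: (i) the explicit choice of the weights $\mu_{j,k}$ with $\sum_{j,k}\mu_{j,k}\le\mu$ and of the vertex $i(j,k)$, together with the verification that, because the target $(\tfrac1p,\tfrac1q)$ lies in the \emph{open} hull of three non-collinear points, the exponents $j(\tfrac1{p}-\tfrac1{p_{i(j,k)}})+k(\tfrac1{q}-\tfrac1{q_{i(j,k)}})$ can be made strictly negative in every direction, yielding geometric decay of $\sum_{j,k}|\{|T(f_j,g_k)|>\mu_{j,k}\}|$; and (ii) the upgrade from the resulting weak-type bound to the strong $L^r$ conclusion, which does not follow from the summation alone but from running the weak-type argument at all points of a small neighbourhood of the target inside the hull and interpolating once more (or, for $0<r<1$, from $r$-subadditivity applied with a genuinely summable double series --- which again requires the quantitative decay of (i)). Your remark that the hypothesis $\tfrac1p+\tfrac1q\ge\tfrac1r$ ``is what ensures strong type'' also misattributes its role: the strong-type upgrade comes from the open-hull condition, while $\tfrac1p+\tfrac1q\ge\tfrac1r$ is a separate structural restriction on which part of the plane $\tfrac1r=\theta_0+\tfrac{\theta_1}{p}+\tfrac{\theta_2}{q}$ is admissible. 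Since these are exactly the steps where such interpolation theorems can fail (e.g.\ on the boundary of the hull), the proposal as written is a correct plan but not yet a proof; for the purposes of this paper the statement is legitimately used as a citation.
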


The article is organized as follows. In the next section we prove Theorem~\ref{Kenig Stein Hn}. Section~\ref{Main} is dedicated to the proof of our main result Theorem~\ref{Power weights C}. Finally, as a consequence of Theorem~\ref{Power weights C}, we conclude this article by Stein-Weiss inequality for the bilinear fractional integral operator $B_\lambda$ on $\mathbb{H}^n$. Throughout this article, we write  $A\lesssim B$ and $ B \gtrsim A$ to abbreviate $A\leq C B$ for some constant $C$ is independent of $A$ and $B$, and $A \simeq B $ means both $A\lesssim B$ and $ A \gtrsim B$. We write the Euclidean convolution of $f$ and $g$ on $\mathbb{R}^{2n+1}$ by $f*_{e}\, g  (x):=\int_{\mathbb{R}^{2n+1}} f(y) g(x-y)\, dy$.  

\section{Proof of Theorem \ref{Kenig Stein Hn} }
We first prove part ($b$) in \Cref{Kenig Stein Hn} when $p=q=1$. This is the key estimate for proving \Cref{Kenig Stein Hn}. Let us introduce the following operators which are pieces of the operator $B_{\lambda}$.
\begin{equation*}
    B(f,g)(x) = \int_{|y| \simeq 1} f(xy^{-1}) g(xy) dy,
\end{equation*}
and
\begin{equation*}
    B_{k}(f,g)(x) = \int_{|y| \simeq 2^{-k}} f(xy^{-1}) g(xy) dy.
\end{equation*}
Our main goal is to establish the end-point weak type boundedness $L^{1}(\H^n) \times L^{1}(\H^n) \rightarrow L^{1/2}(\H^n)$ for the pieces $B_{k}$. We address this as the following lemma.
\begin{lemma}\label{pieces Bk}
The following statements hold:
\begin{enumerate}
    \item [\emph{(\emph{i})}] \hspace*{.7mm} $\lVert  B(f,g)   \rVert_{L^{1/2} (\H^n)} \,  \lesssim  \,  \lVert  f  \rVert_{L^{1} (\H^n)} \, \lVert  g  \rVert_{L^{1} (\H^n)}. $
    \item [\emph{(\emph{ii})}] \hspace*{.7mm} $\lVert  B(f,g)   \rVert_{L^{1} (\H^n)}  \,  \lesssim \, \lVert  f  \rVert_{L^{1} (\H^n)} \,  \lVert  g  \rVert_{L^{1} (\H^n)}. $
    \item [\emph{(\emph{iii})}] \hspace*{.7mm} $\lVert  B_{k}(f,g)   \rVert_{L^{1/2} (\H^n)} \, \lesssim \, 2^{-Qk} \  \lVert  f  \rVert_{L^{1} (\H^n)} \,  \lVert  g  \rVert_{L^{1} (\H^n)}. $
    \item [\emph{(\emph{iv})}] \hspace*{.7mm} $\lVert  B_{k}(f,g)   \rVert_{L^{1} (\H^n)} \lesssim \lVert  f  \rVert_{L^{1} (\H^n)} \, \lVert  g  \rVert_{L^{1} (\H^n)}. $
\end{enumerate}
\end{lemma}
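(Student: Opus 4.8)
The plan is to reduce all four statements to two mechanisms: the $L^{1}$ bounds (ii), (iv), proved by a change of variables adapted to the group law, and the $L^{1/2}$ bounds (i), (iii), proved by localizing at the scale of the annulus and feeding in the $L^{1}$ bound. Throughout I may assume $f,g\ge 0$, since $|B_{k}(f,g)|\le B_{k}(|f|,|g|)$ pointwise, and I note that $B=B_{0}$, so it suffices to treat $B_{k}$ for every $k$ (with $k=0$ recovering (i), (ii)).

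For the $L^{1}$ bounds I would start from Tonelli, writing $\|B_{k}(f,g)\|_{L^{1}}\le\int_{|y|\simeq 2^{-k}}\int_{\mathbb{H}^n}f(xy^{-1})g(xy)\,dx\,dy$, and for each fixed $y$ substitute $u=xy^{-1}$. Since the Haar measure on the unimodular group $\mathbb{H}^n$ is both left and right invariant, $dx=du$ and $xy=uy^{2}$. A direct computation with the group law gives $y^{2}=(2w,2s)$ for $y=(w,s)$, so the squaring map is the linear map $(w,s)\mapsto(2w,2s)$ on $\mathbb{R}^{2n+1}$, with constant Jacobian $2^{2n+1}=2^{Q-1}$. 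Substituting $v=y^{2}$ and then using left invariance in $v$ yields $\int_{|y|\simeq 2^{-k}}g(uy^{2})\,dy\le 2^{-(Q-1)}\|g\|_{L^{1}}$ uniformly in $k$, and hence (iv) (and (ii)) with a $k$-independent constant.

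For the $L^{1/2}$ bounds the naive route, Cauchy--Schwarz in $y$, is too lossy: it replaces $\|f\|_{L^{1}}$ by $\|f\|_{L^{2}}$, which fails for concentrated $f$. Instead I would exploit that $B_{k}$ is local at scale $2^{-k}$. Cover $\mathbb{H}^n$ by Koranyi balls $\{Q_{j}\}$ of radius $2^{-k}$ with bounded overlap (available since $\mathbb{H}^n$ is a space of homogeneous type, with overlap constant uniform in $k$ by dilation invariance), and let $\tilde Q_{j}$ be a fixed dilate. For $x\in Q_{j}$ and $|y|\simeq 2^{-k}$, the quasi-triangle inequality for the Koranyi norm forces $xy^{-1},xy\in\tilde Q_{j}$, so $B_{k}(f,g)=B_{k}(f_{j},g_{j})$ on $Q_{j}$, where $f_{j}=f\,\mathbf{1}_{\tilde Q_{j}}$ and $g_{j}=g\,\mathbf{1}_{\tilde Q_{j}}$. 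On each $Q_{j}$ I apply H\"older in $x$ with exponent $2$ to trade the $\tfrac12$-integral for an $L^{1}$-integral,
\[
\int_{Q_{j}}B_{k}(f_{j},g_{j})^{1/2}\le|Q_{j}|^{1/2}\Big(\int_{\mathbb{H}^n}B_{k}(f_{j},g_{j})\Big)^{1/2}\lesssim 2^{-kQ/2}\,\|f_{j}\|_{L^{1}}^{1/2}\|g_{j}\|_{L^{1}}^{1/2},
\]
where the last step uses the already-proven $L^{1}$ bound (iv). The gain is that passing to $L^{1}$ costs only $|Q_{j}|^{1/2}\simeq 2^{-kQ/2}$, precisely because the relevant $x$ live in a single ball of measure $\simeq 2^{-kQ}$.

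Summing over $j$ and using discrete Cauchy--Schwarz together with bounded overlap (so that $\sum_{j}\|f_{j}\|_{L^{1}}\lesssim\|f\|_{L^{1}}$) gives
\[
\int_{\mathbb{H}^n}B_{k}(f,g)^{1/2}\lesssim 2^{-kQ/2}\Big(\sum_{j}\|f_{j}\|_{L^{1}}\Big)^{1/2}\Big(\sum_{j}\|g_{j}\|_{L^{1}}\Big)^{1/2}\lesssim 2^{-kQ/2}\|f\|_{L^{1}}^{1/2}\|g\|_{L^{1}}^{1/2};
\]
squaring yields (iii), and $k=0$ specializes to (i). The main obstacle is exactly this $L^{1/2}$ estimate: the key realization is that it must be driven by the locality of the annular kernel rather than by a pointwise Cauchy--Schwarz, and the bookkeeping to watch is that the H\"older step and the covering conspire to produce precisely the factor $2^{-kQ}$ demanded by (iii).
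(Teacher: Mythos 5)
Your argument is correct and is essentially the paper's: both rest on the locality of the annular piece, a cell-wise Cauchy--Schwarz converting the $L^{1/2}$ quasi-norm into an $L^{1}$ integral at the cost of $|Q_j|^{1/2}$, the Jacobian $2^{2n+1}$ of the squaring map $y\mapsto y^{2}$ for the $L^{1}$ bound, and bounded overlap together with discrete Cauchy--Schwarz for the summation over cells. The only differences are organizational and immaterial: you prove the $L^{1}$ bound globally first and then feed the localized truncations $f\,\mathbf{1}_{\tilde Q_{j}}$ into it, working directly at scale $2^{-k}$, whereas the paper carries out the change of variables inside each unit cell $Q_{a}$ (producing the enlarged cell $Q_{a}^{*}$ explicitly) and then deduces (iii), (iv) from (i), (ii) by dilation.
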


\begin{proof}[Proof of \Cref{pieces Bk}]
The statements ($iii$) and ($iv$) follow from ($i$) and ($ii$), respectively, by scaling: Let $r=2^{-k}$ and   $B_{r}(f,g)(x) = \int_{|y| \simeq r} f(xy^{-1}) g(xy) dy$ then
\begin{equation*}
    \delta_{r} \left[ B( \delta_{r} f, \delta_{r} g)\right] = r^{-Q} \, B_{r}(f,g), \ \ \ Q=2n+2.
\end{equation*}
We assume, without loss of generality, that $f \geq 0, \, g \geq 0$. We begin with proving ($i$). For $a \in \Z^{2n+1}$, let $Q_{a} = a \cdot Q_{0}$, where $Q_{0}= [ 0,1)^{2n+1}$. Then,
\begin{equation}
    \begin{split}\label{L1 cross L1 to L1/2}
        \lVert  B(f,g) \,  \chi_{Q_{a}}  \rVert_{L^{1/2} (\H^n)} & \leq \int_{Q_{a}} B(f,g)(x) dx \leq \int_{ x \in Q_{a}} \int_{  y \,  \in B(0,1) } f(xy^{-1}) \, g(xy) \, dy \, dx \\
        & \stackrel{y \rightarrow y^{-1} \cdot x}{=}  
          \int_{ Q_{a}} \int_{   y \,  \in \,  x \cdot B(0,1) } f(y) \, g(xy^{-1}x) \, dy \, dx \\
          & \leq \int_{ Q_{a}} \int_{   y \,  \in \,  Q_{a} \cdot B(0,1) } f(y) \, g(xy^{-1}x) \, dy \, dx \\
          & =  \int_{   y \,  \in \,  Q_{a} \cdot B(0,1) } f(y) \int_{ x \in Q_{a}} g(xy^{-1}x) \, dx \, dy \\
          &  \stackrel{x \rightarrow x \cdot y}{=}  \int_{   y \,  \in \,  Q_{a} \cdot B(0,1) } f(y) \int_{ x \in Q_{a} \cdot y^{-1} } g(x^{2} y ) \, dx \, dy\\
          & \stackrel{x \rightarrow x/2, \  x \rightarrow  x \cdot y^{-1}}{=} 2^{-2n-1} \, \int_{   y \,  \in \,  Q_{a} \cdot B(0,1) } f(y) \int_{ x \in \left(   Q_{a} \cdot y^{-1} \right)^{2} \cdot y } g(x ) \, dx \, dy\\
          & \leq 2^{-2n-1} \, \int_{   y \,  \in \,  Q_{a}^{*}  } f(y) \, dy  \int_{ x \in Q_{a}^{*}  } g(x ) \, dx,
    \end{split}
\end{equation}
where $Q_{a}^{*}:= a \cdot Q_{0} \cdot B(0,1) \cdot Q_{0}^{-1} . Q_{0} \subset a \cdot \left( [-4,4]^{2n} \times [-16, 16] \right)$.

Observe that  $ Q_{a}$ and $Q_{a}^{*}$ have bounded overlapping and covers whole of $\H^{n}$. Indeed, 
let $(z,t)= (x+ i y, t) \in \H^{n}$. Choose an $a' \in \Z^{2n}$ such that $a' \leq (x,y) < a' + (1, \cdots, 1)$, component wise. Having chosen $a'$, choose integer, say $a_{2n+1}$ such that $t - \frac{1}{2} \Im (a'\cdot \bar{z}) \in [a_{2n+1}, a_{2n+1} +1)$. Then we have an $a=(a', a_{2n+1}) \in \Z^{2n+1}$ such $a^{-1} \cdot (z,t) \in Q_{0}$. So $\H^{n} = \cup_{a \in \Z^{2n+1}} Q_{a}$. For bounded overlapping of $Q_{a}$, let us fix an $a \in \Z^{2n+1}$ and consider $\tilde{a} \in \Z^{2n+1}$ such that
\begin{equation*}
  a \cdot Q_{0} \cap \tilde{a} \cdot Q_{0} \neq \emptyset. 
\end{equation*}
Equivalently, $ Q_{0} \cap a^{-1} \tilde{a} \cdot Q_{0} \neq \emptyset $. Let $(z,t) = a^{-1} \cdot \tilde{a}$ and $(w,s) \in Q_{0}$. Let $\| z \|$ means the Euclidean norm of $z \in \C^{n}$. Then, if $\| z\|>2 \sqrt{n}$, then $(z,t) \cdot (w,s) = (z+w, t + s + \frac{1}{2} \Im (z \cdot \bar{w})) \notin Q_{0}$.  If $\| z\| \leq 2 \sqrt{n}$ but $|t| > 2n+2$, then $|t + s + \frac{1}{2} \Im (z \cdot \bar{w}) | \geq 2(n+1) - (n+1)= n+1$. So, again $(z,t) \cdot (w,s)  \notin Q_{0}$. If $\| z\| \leq 2 \sqrt{n}$ and $|t| \leq 2n+2$, then for fixed $a$, we are, at most, counting the number of lattice points $\tilde{a} \in \Z^{2n+1}$ such that $\tilde{a} \in a \cdot B(0, 4 \sqrt{n} )$ which is, clearly, $\simeq {n}^{Q/2} $. Similarly, we can argue for the sets $Q_{a}^{*}$.

So,
\begin{equation*}
    \begin{split}
        \lVert  B(f,g)  \rVert_{L^{1/2} (\H^n)}^{1/2} & \simeq \sum_{a \in \Z^{2n+1}}  \lVert   B(f,g) \,  \chi_{Q_{a}}  \rVert_{L^{1/2} (\H^n)}^{1/2} \\
        & \lesssim \sum_{a \in \Z^{2n+1}}  \lVert  f \,  \chi_{ Q_{a}^{*} }  \rVert_{L^{1} (\H^n)}^{1/2} \, \lVert  g \,  \chi_{ Q_{a}^{*} }  \rVert_{L^{1} (\H^n)}^{1/2}\\
        & \leq \left(  \sum_{a \in \Z^{2n+1}}  \lVert  f \,  \chi_{ Q_{a}^{*} }  \rVert_{L^{1} (\H^n)} \right)^{1/2} \, \left(  \sum_{a \in \Z^{2n+1}}  \lVert  g \,  \chi_{ Q_{a}^{*} }  \rVert_{L^{1} (\H^n)} \right)^{1/2}\\
        & \simeq \lVert  f   \rVert_{L^{1} (\H^n)} \, \lVert  g   \rVert_{L^{1} (\H^n)},
    \end{split}
\end{equation*}
establishing $(i)$.

Next, ($ii$) follows from using the same set of change of variables and Fubini’s theorem as in (\ref{L1 cross L1 to L1/2}). Thus, completing the proof of \Cref{pieces Bk}.
\end{proof}
Returning to the proof of part ($b$) in \Cref{Kenig Stein Hn} when $p=q=1$, $\frac{1}{r} = 2 - \frac{\lambda}{Q}$.  Let $\lVert  f   \rVert_{L^{1} (\H^n)} = \lVert  g   \rVert_{L^{1} (\H^n)} = 1$. Let us decompose the operator $B_{\lambda}$ as
\begin{equation*}
\begin{split}
    B_{\lambda}(f,g)(x)  &  \simeq  \sum_{ k \in \Z} 2^{k(Q- \lambda)} B_{k}(f,g)(x)\\
    & = \sum_{ k \leq k_{0}} + \sum_{ k \geq k_{0}} =: F_{1} + F_{2},
\end{split}
\end{equation*}
and for $F_{1}$ and $F_{2}$ we have, using ($iii$) and ($iv$) in \Cref{pieces Bk},
\begin{equation*}
    \begin{split}
        \lVert  F_{1}   \rVert_{L^{1} (\H^n)} \leq \sum_{ k \leq k_{0}} \, 2^{k(Q- \lambda)}  \lVert   B_{k}(f,g)   \rVert_{L^{1} (\H^n)} \lesssim \sum_{ k \leq k_{0}} \, 2^{k(Q- \lambda)} \simeq  2^{k_{0}(Q- \lambda)}
    \end{split}
\end{equation*}
and
\begin{equation*}
    \begin{split}
        \lVert  F_{2}   \rVert_{L^{1/2} (\H^n)}^{1/2} \leq \sum_{ k > k_{0}} \, 2^{ \frac{k(Q- \lambda)}{2} }  \lVert   B_{k}(f,g)   \rVert_{L^{1/2} (\H^n)}^{1/2} \lesssim \sum_{ k > k_{0}} \, 2^{ \frac{k(Q- \lambda)}{2} } 2^{- \frac{kQ}{2}} \simeq  2^{ -  \frac{\lambda }{2} k_{0} }.
    \end{split}
\end{equation*}
Then, for all $t>0$,
\begin{equation*}
    \begin{split}
        \left| \left\lbrace B_{\lambda}(f,g) > t \right\rbrace \right| & \leq  \left| \left\lbrace F_{1} > \frac{C t}{2} \right\rbrace \right| \, + \,  \left| \left\lbrace F_{2} > \frac{C t }{2} \right\rbrace \right|\\
        & \lesssim \frac{ \lVert  F_{1}   \rVert_{L^{1} (\H^n)} }{ t } + \frac{ \lVert  F_{2}   \rVert_{L^{1/2} (\H^n)}^{1/2} }{ t^{1/2} }\\
        & \lesssim \frac{ 2^{k_{0}(Q- \lambda)} }{t } \, + \,  \frac{ 2^{ -  \frac{\lambda }{2} k_{0} } }{ t^{1/2} }.
    \end{split}
\end{equation*}
Optimising the right hand side of the above with respect to $k_{0}$, that is, choosing $k_{0}$ such that $ 2^{k_{0}(Q- \lambda)} / t  \, =  \,   2^{ -  \frac{\lambda }{2} k_{0} } /  t^{1/2} $, gives the desired estimate
\begin{equation*}
   \left| \left\lbrace B_{\lambda}(f,g) > t \right\rbrace \right| \lesssim \frac{1}{ t^{r} }, \ \ \  \frac{1}{r} = 2 - \frac{\lambda}{Q},
\end{equation*}
which settles the proof of ($b$), in \Cref{Kenig Stein Hn} when $p=q=1$. To finish part ($b$), observe that, if $g \in L^{\infty} (\H^{n} )$, we have
\begin{equation*}
    B_{\lambda} (f,g)(x) \leq \| g \|_{L^{\infty}(\mathbb{H}^n)} \, \left( f * \frac{1}{|y|^{Q - \lambda}} \right) (x), \ \ \ x \in \H^{n}. 
\end{equation*}
So, from linear fractional integration on $\H^{n}$,
\begin{equation*}
  \begin{split}
        \| \,  B_{\lambda}(f,g) \|_{L^{r, \infty}(\mathbb{H}^n  )} 
        & \leq \, \| g \|_{L^{\infty}(\mathbb{H}^n)}  \left\lVert  f * \frac{1}{|y|^{Q - \lambda}}  \right\rVert_{L^{r, \infty}( \mathbb{H}^n  )}\\
        & \lesssim  \| g \|_{L^{\infty}(\mathbb{H}^n)} \, \| f \|_{L^{1}(\mathbb{H}^n)},
  \end{split}
\end{equation*}
if $\frac{1}{r} = 1 - \frac{\lambda}{Q}$ which is, indeed, the situation when $p=1, q= \infty$. If $g \in L^{q} (\H^{n} )$, $1<q< \infty$, then $(b)$ follows from linear interpolation, by fixing $f \in L^{1} (\H^{n} )$, and using bounds for $B_{\lambda}$ in  $ L^{1} (\H^{n} ) \times L^{1} (\H^{n} ) \rightarrow L^{r, \infty} (\H^{n} )$ (the key estimate) and $ L^{1} (\H^{n} ) \times L^{\infty} (\H^{n} ) \rightarrow L^{r, \infty} (\H^{n} )$ (the sub case discussed above). 

The part $(a)$ is obtained from $(b)$, by applying bilinear interpolation \Cref{S. Janson}. For the sake of completeness, we briefly explain this point from \cite{Kenig-Stein-MRL}. Consider the open convex set in $\R^2$,
\begin{equation*}
    C := \left\lbrace (x,y) \in \R^2 : x + y > \frac{\lambda}{Q}, \, 0<x, y < 1 \right\rbrace, \ \ \ 0< \lambda < Q.
\end{equation*}
Observe that the interior of $C$ is precisely the union of interior of  triangles whose vertices lie on different sides of the square $[0,1]^2$ intersected with $C$. Thus, by symmetry and part ($b$), it suffices to establish the ``weak-type" inequality for $p= \infty$ and $1<q<\infty$. But for $f \in L^{\infty}(\H^{n})$,
\begin{equation*}
    B_{\lambda} (f,g)(x) \leq \| f \|_{L^{\infty}(\mathbb{H}^n)} \, \left( g * \frac{1}{|y|^{Q - \lambda}} \right) (x), \ \ \ x \in \H^{n},
\end{equation*}
which implies
\begin{equation*}
  \begin{split}
        \| \,  B_{\lambda}(f,g) \|_{L^{r}(\mathbb{H}^n  )} 
        & \leq \, \| f \|_{L^{\infty}(\mathbb{H}^n)}  \left\lVert  g * \frac{1}{|y|^{Q - \lambda}}  \right\rVert_{L^{r}( \mathbb{H}^n  )}\\
        & \lesssim  \| f \|_{L^{\infty}(\mathbb{H}^n)} \, \| g \|_{L^{q}(\mathbb{H}^n)},
  \end{split}
\end{equation*}
which is guaranteed by the strong type boundedness of linear fraction operator on $\H^{n}$ provided, $\frac{1}{r} = \frac{1}{q} - \frac{\lambda}{Q}>0$, $1<q<\infty$, which is, indeed, true in this case. .

\section{Characterization of power weights}
\label{Main}

In this section we provide the proof of Theorem~\ref{Power weights C}. Let us start with proving the sufficient part. Our proof of the sufficient part involves delicate analysis of singularities of the operator $B_\alpha$. Subsequently, we decompose it appropriately to estimate each piece individually. In contrast with the proof of \cite{Komori-2020}, we provide a unified approach to handle the operator $B_\lambda$ irrespective of the sign of $\alpha$ and $\beta$.

\subsection{Proof of the sufficient part}\label{Sufficiency}
\begin{proof}[Proof of $\eqref{Necessary-condn-ABC}\implies \eqref{suff-cond-ABC}$:] Since $|B_{\lambda}(f, g)|\leq B_{\lambda}(|f|, |g|)$, throughout this proof we will assume that $f, g$ are non-negative functions. First we will prove the following weak type estimate
\begin{align}\label{weak type estimates}
\|S(f, g)\|_{L^{r, \infty}(\mathbb{H}^n)}\leq K \|f\|_{L^p(\mathbb{H}^n)}\|g\|_{L^q(\mathbb{H}^n)},
\end{align}
where,
\begin{align*}
S(f, g)=S_{\alpha, \beta, \gamma}(f, g)(x):= |x|^{-\gamma}  \int_{\mathbb{H}^n}  \frac{f(x y^{-1})}{|xy^{-1}|^{\alpha}} \, \frac{g(xy)}{|xy|^{\beta}}  \, \frac{dy}{|y|^{Q-\lambda}}.  
\end{align*}
Once the proof of \eqref{weak type estimates} is complete, as an application of bilinear interpolation, we can conclude the required strong type estimates.

We analyse the operator $S$ into three parts. Namely the following: $S(f, g)(x)= \sum_{i=1}^{3}\mathcal{J}_{i}(x)$, where
\begin{align*}
&\mathcal{J}_{1}(x):= |x|^{-\gamma}  \int_{y\in B(0, \frac{|x|}{2 } ) \cdot x}  \frac{f(x y^{-1})}{|xy^{-1}|^{\alpha}} \, \frac{g(xy)}{|xy|^{\beta}}  \, \frac{dy}{|y|^{Q-\lambda}},\\
&\mathcal{J}_{2}(x):=|x|^{-\gamma}  \int_{y\in   x^{-1} \cdot B(0, \frac{|x|}{ 2 }) }  \frac{f(x y^{-1})}{|xy^{-1}|^{\alpha}} \, \frac{g(xy)}{|xy|^{\beta}}  \, \frac{dy}{|y|^{Q-\lambda}},\\
&\mathcal{J}_{3}(x):= |x|^{-\gamma}  \int_{  \H^{n} \setminus \left[  B(0, \frac{|x|}{2 }) \cdot x  \,  \bigcup \,   x^{-1} \cdot B(0, \frac{|x|}{2 }) \right]   }  \frac{f(x y^{-1})}{|xy^{-1}|^{\alpha}} \, \frac{g(xy)}{|xy|^{\beta}}  \, \frac{dy}{|y|^{Q-\lambda}}.
\end{align*}
We first estimate $\mathcal{J}_{3}$.\\

\noindent\underline{\textbf{Estimate for $\mathcal{J}_3$:}}
Let us denote the set $ \H^{n} \setminus \left[  B(0, \frac{|x|}{2 }) \cdot x  \,  \bigcup \,   x^{-1} \cdot B(0, \frac{|x|}{2 }) \right] $ by $G_{x}$. One can decompose $\mathcal{J}_{3}$ as follows:
\begin{align*}
\mathcal{J}_{3}(x)&\leq \mathcal{J}_{31}(x)+\mathcal{J}_{32}(x),\,\,\text{where,}\\
&\mathcal{J}_{31}(x):= |x|^{-\gamma}  \int_{\{y: |y|\geq 2|x|\}}  \frac{f(x y^{-1})}{|xy^{-1}|^{\alpha}} \, \frac{g(xy)}{|xy|^{\beta}}  \, \frac{dy}{|y|^{Q-\lambda}},\\
&\mathcal{J}_{32}(x):= |x|^{-\gamma}  \int_{\{y\in G_{x} : |y|< 2|x|\}}  \frac{f(x y^{-1})}{|xy^{-1}|^{\alpha}} \, \frac{g(xy)}{|xy|^{\beta}}  \, \frac{dy}{|y|^{Q-\lambda}}.
\end{align*}
Observe that for $y$ such that $|y|\geq 2|x|$ we have $|xy^{-1}|\simeq |y|$ and $|xy|\simeq |y|$. According to our hypothesis $-Q+\lambda\leq \alpha+\beta$. First, let us consider the case when $-Q+\lambda=\alpha+\beta$. Therefore,
\begin{align*}
\mathcal{J}_{31}(x)\lesssim |x|^{-\gamma} (f*_{e} g)(2x).   
\end{align*}
We also have $\gamma\geq -\alpha-\beta=Q-\lambda>0$, and $\frac{1}{r}=\frac{\gamma}{Q}+\frac{1}{s}$, where, $\frac{1}{s}=\frac{1}{p}+\frac{1}{q}-1$. Using H\"older's inequality for weak-type spaces and Young's convolution inequality subsequently, we obtain $\|\mathcal{I}_{31}\|_{L^{r, \infty}(\mathbb{H}^{n})}\lesssim \|f\|_{L^p(\mathbb{H}^{n})} \|g\|_{L^q(\mathbb{H}^{n})}$, which is the required estimate.
\medskip

When $-Q+\lambda<\alpha+\beta$, together with the condition $\gamma>Q/r$, we can ensure that $\lambda-\alpha-\beta<Q(\frac{1}{p}+\frac{1}{q})$. Now choosing $\mu>0$ such that $\mu\in (\lambda-\alpha-\beta-\gamma, Q(\frac{1}{p}+\frac{1}{q}))\cap (\lambda-\alpha-\beta, Q)$, we conclude the following
\begin{align}\label{estimate-1}
\nonumber\mathcal{J}_{31}(x) 
& \lesssim |x|^{-\gamma}  \int_{ \{ y: |y| \geq 2|x| \} }  \frac{ f(x y^{-1})} { |y|^{\alpha} } \, \frac{ g(xy) } { |y|^{\beta} }  \, \frac{dy}{ |y|^{Q-\lambda}},  \\ 
\nonumber & =  |x|^{-\gamma}   \int_{ \{ y: |y| \geq 2|x| \} } \frac{ |y|^{ -\alpha-\beta+\lambda-\mu}} { |y|^{ Q-\mu } }  f(x y^{-1}) g(xy) \, dy\\
&\lesssim |x|^{- \gamma - \alpha - \beta + \lambda - \mu}  B_{ \mu }(f, g)(x).
\end{align}
Define $\frac{1}{s}:=\frac{1}{p}+\frac{1}{q}-\frac{\mu}{Q}$, then it is trivial to see that $\frac{1}{r}=\frac{1}{s}+\frac{\alpha+\beta+\gamma+\mu-\lambda}{Q}$. Denote $h(x)=|x|^{-\gamma-\alpha-\beta-\mu+\lambda}$. Using H\"older's inequality for weak-type spaces we obtain
\begin{align*}
\|\mathcal{J}_{31}\|_{L^{r, \infty}(\mathbb{H}^n)}\lesssim \|h\|_{L^{\frac{Q}{\alpha+\beta+\gamma+\mu-\lambda}, \infty}}\|B_{\mu}(f, g)\|_{L^{s, \infty}}\lesssim K_{\alpha,\beta, \gamma, Q, \lambda}\|f\|_{L^p(\mathbb{H}^n)}\|g\|_{L^q(\mathbb{H}^n)},
\end{align*}
where we have used Theorem~\ref{Kenig Stein Hn} in the last inequality. This completes the estimates for $\mathcal{J}_{31}$.
\medskip

To estimate $\mathcal{J}_{32}$, observe that for points $y \in G_{x}$ with $|y|<2|x|$, we have $|xy^{-1}|=|yx^{-1}|\simeq |x|$ and $|xy|\simeq |x|$. Assuming that $\alpha+\beta+\gamma>0$, one can choose $\mu_1\in (0, Q(\frac{1}{p}+\frac{1}{q}))$ such that $-\alpha-\beta-\gamma+\lambda<\mu_{1}<\lambda$. Now
\begin{align*}
\nonumber\mathcal{J}_{32}(x)&\lesssim |x|^{-\gamma-\alpha-\beta} \int_{\{y: |y|<2|x|\}}\frac{|y|^{\lambda-\mu_1}}{|y|^{Q-\mu_1}} f(x y^{-1}) g(xy)\, dy \\
&\lesssim |x|^{-\gamma-\alpha-\beta+\lambda-\mu_1} B_{\mu_1}(f, g)(x).
\end{align*}
At this point we follow the argument provided after equation~\eqref{estimate-1} to conclude that
$\|\mathcal{J}_{32}\|_{L^{r, \infty}(\mathbb{H}^n)}\lesssim \|f\|_{L^{p}(\mathbb{H}^n)}\|g\|_{L^{q}(\mathbb{H}^n)}$. Similarly, if $\alpha+\beta+\gamma=0$, we have $\mathcal{J}_{32}(x)\lesssim B_{\lambda}(f, g)(x)$, then also we have the required estimate invoking Theorem~\ref{Kenig Stein Hn}.  \\
\medskip

\noindent\underline{\textbf{Estimate for $\mathcal{J}_{1}$:}} Let $y\in B(0, \frac{|x|}{2 }) \cdot x$ then $y= \xi \cdot x$ for some $ \xi \in B(0, \frac{|x|}{2 })$. Observe that $x y = x \cdot \xi \cdot x = 2x + \xi $ and $|xy|=|2x + \xi |\geq | 2x| - | \xi |\geq c|x|$, for some fixed constant $1<c<\infty$. Moreover, $|xy|\lesssim |x|$. Again observe that $|y|=| \xi \cdot x|\leq | \xi |+|x|\leq \frac{3}{2} |x|$, $|y|=| \xi \cdot x | \geq |x|-| \xi |\geq \frac{1}{2} |x|$. Incorporating these estimates we obtain
\begin{align}
\label{equation-need}
\mathcal{J}_{1}(x)\lesssim |x|^{-\gamma-\beta-Q+\lambda}\int_{y\in  B(0, \frac{|x|}{2 }) \cdot x} \frac{f(x y^{-1})g(xy)}{|xy^{-1}|^{\alpha}} \, dy. 
\end{align}
Our hypothesis $\alpha<Q/p'$ and $-Q+\lambda< \beta+\gamma$ allow us to choose $\mu>0$ such that $Q(1-\frac{1}{p}-\frac{1}{q})<\mu<Q(1-\frac{1}{p})$ and $\alpha< \mu< \alpha+\beta+\gamma+Q-\lambda$. Now
\begin{align*}
\mathcal{J}_{1}(x)&\lesssim |x|^{-\gamma-\beta-Q+\lambda} \int_{y\in  B(0, \frac{|x|}{2 }) \cdot x} |x y^{-1}|^{\mu-\alpha}\frac{f(x y^{-1})g(xy)}{|xy^{-1}|^{\mu}} \, dy\\
& \lesssim |x|^{-\gamma-\beta-Q+\lambda+\mu-\alpha}  \int_{y\in  B(0, \frac{|x|}{2 }) \cdot x} \frac{f(x y^{-1})g(xy)}{|xy^{-1}|^{\mu}} \, dy~~(\text{since}~|xy^{-1}|\leq c|x|)\\
&\stackrel{z=x. y^{-1}}{\lesssim}\,\, |x|^{-\gamma-\beta-Q+\lambda+\mu-\alpha} \int_{\mathbb{R}^{2n+1}}\frac{f(z)g(2x+z^{-1})}{|z|^{\mu}}\ dz\\
&\lesssim |x|^{-\gamma-\beta-Q+\lambda+\mu-\alpha} \left(\frac{f}{|\cdot|^{\mu}}*_{e}g\right)(2x).
\end{align*}
Write $\frac{1}{s}=\frac{1}{p}+\frac{1}{q}-1+\frac{\mu}{Q}$, then $\frac{1}{r}=\frac{1}{s}+\frac{\alpha+\beta+\gamma+Q-\mu-\lambda}{Q}$. This implies 
\begin{equation*}
\|\mathcal{J}_{1}\|_{L^{r, \infty}(\mathbb{H}^n)} \lesssim  \||x|^{-\gamma-\beta-Q+\lambda+\mu-\alpha}\|_{ L^{\frac{Q}{\alpha+\beta+\gamma+Q-\lambda-\mu}, \infty}(\mathbb{R}^{2n+1})} \Big\|\left(\frac{f}{|\cdot|^{\mu}} *_{e} g \right)\Big\|_{L^{s, \infty}(\mathbb{C}^n\times \mathbb{R})}. 
\end{equation*}

Define $\frac{1}{t}=\frac{1}{p}+\frac{\mu}{Q}$, then $\frac{1}{s}=\frac{1}{t}+\frac{1}{q}-1$. By Young's inequality, we obtain $$\Big\|\left(\frac{f}{|\cdot|^{\mu}}*_{e}g\right)\Big\|_{L^{s, \infty}(\mathbb{R}^{2n+1})}   \lesssim \Big\|\frac{f}{|\cdot|^{\mu}}\Big\|_{L^{t, \infty}(\mathbb{R}^{2n+1})} \|g\|_{L^{q}(\mathbb{R}^{2n+1})}.$$ The required estimate i.e., $\|\mathcal{J}_{1}\|_{L^{r, \infty}(\mathbb{H}^n)}\lesssim \|f\|_{L^p(\mathbb{H}^n)}\|g\|_{L^q(\mathbb{H}^n)}$, follows once we use the inequality $\Big\|\frac{f}{|\cdot|^{\mu}}\Big\|_{L^{t, \infty}(\mathbb{R}^{2n+1})}\lesssim \|f\|_{L^p(\mathbb{H}^n)}$. This completes this case. 
\medskip

We are left with the case when $\beta+\gamma=-Q+\lambda$. As a consequence of the condition $\alpha+\beta+\gamma\geq 0$ we obtain $\alpha\geq Q-\lambda>0$. Now \eqref{equation-need} implies
\begin{align*}
\mathcal{J}_{1}(x)\lesssim \int_{y\in  B(0, \frac{|x|}{2 }) \cdot x} \frac{f(x y^{-1})g(xy)}{|xy^{-1}|^{\alpha}} \, dy\lesssim \left(\frac{f}{|\cdot|^{\alpha}}*_{e}g\right)(2x).
\end{align*}
Observe that in this case $\frac{1}{r}=\frac{1}{p}+\frac{1}{q}-1+\frac{\alpha}{Q}$. Define $\frac{1}{t}=\frac{1}{p}+\frac{\alpha}{Q}$, therefore $\frac{1}{r}=\frac{1}{t}+\frac{1}{q}-1$. Therefore, by Young's inequality, we obtain
\begin{align*}\|\mathcal{J}_{1}\|_{L^{r, \infty}(\mathbb{R}^{2n+1})}&\lesssim \Big\|\left(\frac{f}{|\cdot|^{\alpha}} *_{e} g \right)\Big\|_{L^{r, \infty}(\mathbb{R}^{2n+1})}\\
&\lesssim \Big\|\frac{f}{|\cdot|^{\alpha}}\Big\|_{L^{t, \infty}(\mathbb{R}^{2n+1})}\|g\|_{L^{q}(\mathbb{R}^{2n+1})}\lesssim  \|f\|_{L^p(\mathbb{H}^n)} \|g\|_{L^q(\mathbb{H}^n)}.
\end{align*}
\medskip
\noindent \underline{\textbf{Estimate for $\mathcal{J}_2$:}} This case is similar to $\mathcal{J}_1$, so we skip it.

This completes the proof of inequality~\ref{weak type estimates}. Once we have the week type inequalities, achieving the strong type inequality just uses the multiplinear interpolation Theorem~\ref{S.  Janson}. We explain it here. For fixed $\alpha, \beta$ and $\gamma$ in \Cref{Power weights C}, we have
\begin{equation}\label{range of p q}
0< \frac{1}{p} < \min \left[ 1, \ 1 - \frac{\alpha}{Q} \right], \ \ \   \text{and}   \ \ \ 0< \frac{1}{q} < \min \left[ 1, \ 1 - \frac{\beta}{Q} \right].
\end{equation}
The condition $ \frac{1}{r} \leq \frac{1}{p} +\frac{1}{q} $ is equivalent to $\alpha + \beta + \gamma\leq \lambda$, which combined with $0 \leq \alpha + \beta + \gamma$, gives $0 \leq \alpha + \beta + \gamma \leq \lambda$. Further, the conditions $r< \infty$ and $\gamma < \frac{Q}{r}$ being, respectively, equivalent to $ \frac{  \lambda - (\alpha + \beta + \gamma)  }{ Q  } < \frac{1}{p} + \frac{1}{q}$ and $ \frac{  - \alpha - \beta + \lambda  }{ Q  } < \frac{1}{p} + \frac{1}{q}$,
lead to
\begin{equation*}
\frac{  - \alpha - \beta + \lambda  }{ Q  } +      \max  \left[ -    \frac{  \gamma  }{ Q  }, \ 0  \right] < \frac{1}{p} + \frac{1}{q}.
\end{equation*}

Therefore, consider the open convex set in $\R^2$,

\begin{equation*}
    \begin{split}
 C_{\alpha, \beta, \gamma} :=  \left\lbrace   (x,y) \in \left(0,1\right)^2 \ : \  x + y >          \frac{  - \alpha - \beta + \lambda  }{ Q  }  \right.  & +   \max  \left[ -    \frac{  \gamma  }{ Q  }, \ 0  \right],    \\
          & \left.
      \hspace*{-9em}    0<x < \min \left[ 1, \ 1 - \frac{\alpha}{Q}\right],  \    0 < y < \min \left[ 1, \ 1 - \frac{\beta}{Q} \right] \,   \right\rbrace.  
    \end{split}
\end{equation*}
Depending on the sign of $\alpha, \beta$ and $\gamma$, the set $C_{\alpha, \beta, \gamma} \subseteq (0,1)^2$ changes. But, in all cases, for each point $(\frac{1}{p}, \frac{1}{q})$ in $C_{\alpha, \beta, \gamma}$ one can always choose  three non-collinear points inside $C_{\alpha, \beta, \gamma}$  such that
$(\frac{1}{p}, \frac{1}{q})$ is contained in the interior of the solid triangle inside $C_{\alpha, \beta, \gamma}$, determined by these three points.
 Therefore, in view of \Cref{S.  Janson}, it suffices to show the ``weak-type" inequality for $(\frac{1}{p}, \frac{1}{q})$ in $C_{\alpha, \beta, \gamma}$.

\end{proof}

\subsection{The necessary conditions}\label{Necessity}
In \cite{Komori-2020}, some counter examples were constructed to conclude necessary conditions for the boundedness of $BI_{\lambda}$ on the real line. Here, we construct them on the Heisenberg group $\H^{n}$ of any  dimension.

Recall that the inequality \eqref{suff-cond-ABC} is equivalent to the following unweighted boundedness 
\begin{align}
\label{equivalent-criterion}
\|S_{ \alpha, \beta, \gamma}(f, g)\|_{L^r(\mathbb{H}^n)}\leq K \|f\|_{L^p(\mathbb{H}^n)}\|g\|_{L^q(\mathbb{H}^n)}, 
\end{align}
where the operator $S_{\alpha, \beta, \gamma}(f, g)$ is defined as follows
\begin{align}
S_{\alpha, \beta, \gamma}(f, g)(x):= |x|^{-\gamma}  \int_{\mathbb{H}^n}  \frac{f(x y^{-1})}{|xy^{-1}|^{\alpha}} \, \frac{g(xy)}{|xy|^{\beta}}  \, \frac{dy}{|y|^{Q-\lambda}}.  
\end{align}

\noindent\underline{ \textbf{Necessity of $-Q + \lambda \leq \alpha + \beta$ in \Cref{Power weights C} }}: Suppose $\alpha + \beta < - Q + \lambda$. Since $\gamma < \frac{Q}{r}$, whence
\begin{equation}\label{assumption 0}
    (-\alpha - \beta -Q + \lambda), \ \     \frac{Q}{r} - \gamma >0.
\end{equation}
Also, recall that
\begin{equation}\label{def of r}
    \frac{1}{r}= \frac{1}{p} + \frac{1}{q}- \frac{\lambda}{Q}  + \frac{ \alpha + \beta + \gamma}{Q},
\end{equation}
which implies that $\frac{1}{p} + \frac{1}{q} > 1$.
\medskip

Let $N,M \gg 1$ to be specified later. For $a \in \Z^{2n+1}$, consider sets
\begin{equation*}
    E_{a} = a \cdot \delta_{r_{a}} Q(0,1) \cdot a,
\end{equation*}
where $r_{a}= |a|^{-N-1}$ and $Q(0,r):= [0,r]^{2n} \times [0,r^{2}]$, $0<r< \infty$. Observe that 
\begin{equation*}
    E_{a} = 2a + Q(0,r_{a}).
\end{equation*}
Here, $``+"$ denotes usual addition in $\R^{2n+1}$. Take functions
\begin{equation*}
     f( \xi) := \sum_{a \in \Z^{2n+1} \setminus \{0\} } |a|^{M/p} \, \chi_{E_a}( \xi ), \ \ \xi \in \H^n,
 \end{equation*}
 and
\begin{equation*}
     g( \xi) := \sum_{a \in \Z^{2n+1} \setminus \{0\} } |a|^{M/q} \, \chi_{E_a}( \xi ), \ \ \xi \in \H^n,
 \end{equation*}
The functions $f \in L^p(\H^n)$ and $ g \in L^q(\H^n)$ if 
\begin{equation}\label{assumption 1}
    M < Q N.
\end{equation}
We will show that for these choice of functions, $ S_{\alpha ,\beta , \gamma} (f,g) (x) \, \chi_{|x| \ll 1} \notin L^{r} (\H^n) $.
\medskip

Fix $x \in \R_{+}^{2n+1}:= (0, \infty)^{2n+1}$ and choose $K_0 \gg 1$ such that $(K_0 +1)^{-N-1} \leq |x| < K_0^{-N-1}$. Fix $a$ such that $|a| < K_{0}/2$. Consider sets
\begin{equation*}
      \widetilde{ E }_{a,x}  :=  E_{a} \cdot x \, \cap  \, x^{-1} \cdot  E_{a}.
\end{equation*}
By definition, whenever $y \in \widetilde{ E }_{a,x}$ then $y x^{-1}$, $xy \in E_{a}$. 

For $|a| < K_{0}/2$, we see that $| \widetilde{ E }_{a,x} | \gtrsim r_{a}^{Q} = |a|^{-QN - Q}$. Indeed, we observe that $$\widetilde{ E }_{a,x} = x^{-1} \cdot \left( x \cdot E_{a} \cdot x \, \cap  \,   E_{a} \right) = x^{-1} \cdot \left(  \left[ 2x + 2a + Q(0,r) \right] \bigcap  \left[ 2a + Q(0,r) \right] \right).$$ Thus, $| \widetilde{ E }_{a,x} | = \left[ \prod _{j=1}^{2n} (r_{a} - 2x_{j}) \right] $ $(r_{a}^{2} - 2 x_{2n+1} ).$ From our choice of $K_{0}$, we have $x_{2n+1} < K_{0}^{-2(N+ 1 )}$, whence $r_{a}^{2} - 2 \, x_{2n+1} > |a|^{-2(N+1)}$ $ - 2 K_{0}^{-2(N+ 1 )} \gtrsim |a|^{-2(N+1)} = r_{a}^{2}$, since $|a| < K_{0}/2$. Similarly, $r_{a}- 2 x_{j} \gtrsim r_{a}$, $j = 1, \dots, 2n$. Combining the above estimates, we have $| \widetilde{ E }_{a,x} | \gtrsim r_{a}^{Q}$.

 For $y \in \widetilde{ E }_{a,x}$, we have $|xy|$, $|y x^{-1}|$,  $|y| \simeq |a|$. Indeed, $xy \in E_{a}$ so $xy = a \cdot \xi \cdot a = 2a + \xi$ for some $\xi \in Q(0,r)$. Thus, $|xy|^{4} = \left[ \sum_{j=1}^{2n} (2a_{j} + \xi_{j}  )^{2}  \right]^{2} + (2a_{2n+1} + \xi_{2n + 1}  )^{2} \geq |a|^{4}$. So, we have $|a| \leq |xy| \leq |x| + |y|$ which gives $|y|\geq |a| - |x| \gtrsim |a|$. Since $y \in E_{a} \cdot x$, so $|y| \lesssim |a|$ which in turn implies $|xy|, |y| \simeq |a|$. Similarly, $|y x^{-1}| \simeq |a|$.
 
 For fixed $|x| \ll 1$, the collection $\{\widetilde{ E }_{a,x}\}_{a\in \mathbb{Z}^{2n+1}}$ is a disjoint family of sets. Indeed, $\widetilde{ E }_{a,x}$'s are disjoint if and only if the sets $x \cdot E_{a} \cdot x \, \cap  \,   E_{a} = \left[ 2x + 2a + Q(0,r) \right] \bigcap  \left[ 2a + Q(0,r) \right]$ are disjoint, which is true since $|x| \ll 1$.
 
 Therefore, for $(K_0 +1)^{-N-1} \leq |x| < K_0^{-N-1}$,
\begin{equation*}
    \begin{split}
    S_{\alpha ,\beta , \gamma} (f,g) (x) & \gtrsim
         \ |x|^{ - \gamma }  \sum_{ a \in \Z^{2n+1} \setminus \{ 0 \}  : |a| < K_{0}/2 } \int_{y \in \widetilde{ E }_{a,x}}  \frac{f(y x^{-1})}{|y x^{-1}|^{\alpha}} \, \frac{g(xy)}{|x y|^{\beta}}      \frac{dy}{|y|^{Q- \lambda}}\\
        & \gtrsim |x|^{ - \gamma } \sum_{ a \in \Z^{2n+1} \setminus \{ 0 \}  : |a| < K_{0}/2 } |a|^{     (-\alpha - \beta-Q+ \lambda) + M \left( \frac{1}{p} + \frac{1}{q} \right)    }  |a|^{-QN -Q}.
    \end{split}
\end{equation*}
Assuming,
\begin{equation}\label{assumption 2}
    M \left( \frac{1}{p} + \frac{1}{q} \right) -QN >0,
\end{equation}
we are dealing with the sum of the form
\begin{equation*}
    \sum_{ a=(a', a_{2n+1}) \in \Z^{2n+1} \setminus \{ 0 \}  : \left( |a'|^{4} + a_{2n+1}^{2} \right)^{1/4} \leq K_{0}/2 }    |a|^{R - Q} \simeq K_{0}^{R}, \ \ \ R>0.
\end{equation*}
Hence, 
\begin{equation*}
   S_{\alpha ,\beta , \gamma} (f,g) (x) \gtrsim \  |x|^{      - \gamma  -  \frac{  (- \alpha - \beta-Q+ \lambda) +  M \left( \frac{1}{p} + \frac{1}{q} \right) -QN         }{  N+1 }     } \  \chi_{|x| \ll 1},
\end{equation*}
which implies $\| S_{\alpha ,\beta , \gamma} (f,g) \|_{L^{r}(\H^n)}$ will diverge if 
\begin{equation*}
    \begin{split}
      & \gamma  +  \frac{  (- \alpha - \beta -Q+ \lambda) +  M \left( \frac{1}{p} + \frac{1}{q} \right) -QN         }{  N+1 } \geq \frac{Q}{r}\\
      & \iff  \gamma (N+1)  -Q(N+1) + \lambda -( \alpha + \beta ) + M \left( \frac{1}{p} + \frac{1}{q} \right) \\
      & \hspace*{7em} \geq \ \frac{Q}{r} + \frac{QN}{r} =   \frac{Q}{r} +  QN \left( \frac{1}{p} + \frac{1}{q} \right) - N \lambda + (\alpha + \beta + \gamma) N \\
      & \iff  (- \alpha - \beta - Q + \lambda) (N+1) > \left(  \frac{Q}{r} - \gamma  \right)  +  (QN - M) \left( \frac{1}{p} + \frac{1}{q} \right).
    \end{split}
\end{equation*}
Here, we have used (\ref{def of r}). First pick out $N \gg 1$.  Since $\frac{1}{p} + \frac{1}{q} > 1$, we can choose $M$ close to $QN$ such that (\ref{assumption 1}) and (\ref{assumption 2}) are satisfied. Subsequently, the last inequality holds true for large $N$ because of (\ref{assumption 0}). Thus we arrive at a contradiction. Therefore, we must have $\alpha+\beta\geq -Q+\lambda$. 

\medskip

\noindent\underline{ \textbf{Necessity of $ -Q + \lambda \leq  \beta + \gamma$ in \Cref{Power weights C} }:}  Assume $- \beta - \gamma - Q + \lambda > 0$. For $x \in \R_{+}^{2n + 1}$ such that $|x| \gg 1$, consider the following portion of $S_{\alpha, \beta, \gamma}   (f,g)(x)$:
\begin{equation}\label{y close to x}
   \int_{y \, \in Q(0, |x|/ 2\sqrt{n}) \cdot x}  \frac{|x|^{ - \gamma }}{|xy|^{\beta} \, |y|^{Q- \lambda}}  f(x y^{-1}) g(xy)\,  \frac{dy}{|xy^{-1}|^{ \alpha }}. 
\end{equation}
Arguing as in the previous example, we see that if $y \in Q(0, |x|/2 \sqrt{n}) \cdot x$ then $|xy|, \  |y| \simeq |x|$.

Therefore, (\ref{y close to x}) is bounded below by a constant times of the following
\begin{equation}\label{y close to x 1}
 |x|^{ - \beta - \gamma  + \lambda -Q}  \int_{y \, \in Q(0, |x|/ 2\sqrt{n}) \cdot x}    f(x y^{-1}) g(xy)\,  \frac{dy}{|xy^{-1}|^{ \alpha  }}. 
\end{equation}
Take $f(y)=  |y|^{-s}  \, \chi_{Q(0,1)}(y)$ with $ s < \frac{Q}{p}$ so that $f \in L^p( \H^n)$. Performing the change of variables $y \rightarrow y \cdot x$,  (\ref{y close to x 1}) becomes 
\begin{equation}\label{y close to x 2}
\begin{split}
     |x|^{   - \beta - \gamma  + \lambda -Q  }  \int_{y \, \in Q(0, |x|/ 2\sqrt{n}) }     g(x \cdot y \cdot x)\,  \frac{dy}{|y|^{ \alpha + s}}   .
\end{split} 
\end{equation}
Next, we choose
 \begin{equation*}
     g( \xi) := \sum_{a \in \Z^{2n+1} : |a|>e} |a|^{Q(N-1)/q} \, \left( \log |a| \right)^{-2/q} \chi_{E_a}( \xi ), \ \ \xi \in \H^n,
 \end{equation*}
where $E_a:= a \cdot Q(0,r_a) \cdot a$, $r_a= \frac{1}{|a|^{N}}$. The function $g \in L^q(\H^n)$, which follows from disjointness of the sets $E_a= 2a + Q(0,r_a)$  and their measure $|E_{a}|=|Q(0,r_a)|= |a|^{-QN}$.

Setting $\Tilde{E_{a}}:= a^{1/2} \cdot Q(0,r_a /4) \cdot a^{1/2}$, where the notation $(z, t)^{1/2}$ means $ ( z/2, t/2)$ for $(z,t) \in \H^{n}$.

 If $x \in \Tilde{E_{a}}:= a^{1/2} \cdot Q(0,r_a /4) \cdot a^{1/2}$ and $y \, \in Q(0, r_a /2) $, then $xyx \in E_{a}= a \cdot Q(0,r_a) \cdot a$ and $|x| \simeq |a|$. Indeed, for such $x$ and $y$,
 $xyx = 2x + y \in 2 \left( a^{1/2} \cdot Q(0,r_a /4) \cdot a^{1/2} \right) +  Q(0, r_a /2)$ $= 2 \left[ a +  Q(0,r_a /4)  \right] + Q(0, r_a /2) \subset 2a + 2 Q(0, r_a/4) + Q(0,r_a/2) \subset 2a + Q(0, r_a) = a \cdot Q(0, r_a) \cdot a =E_{a} $. Also, $x = a + \xi$, for some $\xi = (\xi', \xi_{2n+1}) \in [0, \frac{r_a}{4}]^{2n} \times [0, (\frac{r_a}{4})^{2} ] $. Writing $a = (a', a_{2n+1})$, we have $|x|^{4} = \|a' + \xi' \|^{4} + (a_{2n+1} + \xi_{2n+1})^{2} \geq |a|^{4}$, where  $\| a' \|$ is the Euclidean norm of $a' \in \R^{2n+1}$. 
 
 Further, since $\Tilde{E_{a}}= a^{1/2} \cdot Q(0,r_a /4) \cdot a^{1/2} = a + Q(0,r_a /4)$, so clearly $|\Tilde{E_{a}}| = |Q(0,r_a)| \simeq r_a^{Q} = |a|^{-QN}$ and $\{\Tilde{E_{a}}\}_{a\in \mathbb{Z}^{2n+1}}$ is a disjoint collection.
 
Incorporating the above, \eqref{y close to x 2} implies
\begin{equation*}
    \begin{split}
       \|S_{ \alpha, \beta, \gamma}(f, g)\|_{L^r(\mathbb{H}^n)}^{r} & \gtrsim  \sum_{|a|>e} \,  \int_{x \in \Tilde{E_a}} |x|^{(  - \beta - \gamma  + \lambda -Q  )r } \left| \int_{y \, \in Q(0, r_{a}/2) }     g(x \cdot y \cdot x)\,  \frac{dy}{|y|^{ \alpha + s}}    \right|^r dx\\
        & \simeq \sum_{|a|>e} \,   |a|^{(  - \beta - \gamma  + \lambda -Q  )r} |a|^{    \frac{ rQ(N-1) }{q}  }  \left( \log |a|\right)^{- \frac{2r}{q} } |a|^{(s+ \alpha -Q) N r} |a|^{-QN}.
    \end{split}
\end{equation*}
Therefore, $\|S_{ \alpha, \beta, \gamma}(f, g)\|_{L^r(\mathbb{H}^n)}$ diverges provided

\begin{equation*}
\begin{split}
         (  - \beta & - \gamma  + \lambda -Q  )r + \frac{ rQ(N-1) }{q} + (s + \alpha -Q) N r - QN \geq -Q \\
        & \iff  - \beta - \gamma  + \lambda -Q + \frac{ Q(N-1) }{q} + (s + \alpha -Q) N \geq  \frac{ Q(N-1) }{r}\\
        & \iff  - \beta - \gamma  + \lambda -Q + \frac{ Q(N-1) }{q} + ( s + \alpha -Q ) N \\
        & \hspace*{10em} \geq    \left( \frac{ QN }{p} - \frac{1}{p} \right) +    \frac{ Q(N-1) }{q} +  (N-1) ( \alpha + \beta + \gamma - \lambda)\\
        & \iff  N \left( ( -\beta - \gamma   - Q + \lambda  ) - \left( \frac{Q}{p} -s  \right) \right) \geq      \frac{1}{p'}  - \alpha. 
\end{split}
\end{equation*}
 Since $ -\beta - \gamma   - Q + \lambda >0$, we choose $s < \frac{Q}{p}$ sufficiently close to $\frac{Q}{p}$ so that $( -\beta - \gamma   - Q + \lambda  ) - \left( \frac{Q}{p} -s  \right) >0$  and then, taking $N$ large, we have that the last inequality holds true.

\medskip

\noindent\underline{\textbf{Necessity of $ \alpha + \beta + \gamma \geq 0$ in \Cref{Power weights C}}}: Contrarily,  suppose $\gamma_{0}:= \alpha + \beta + \gamma <0$. Then, the homogeneity condition takes the form of  $\frac{1}{r}= \frac{1}{p} + \frac{1}{q}- \frac{\lambda - \gamma_{0} }{Q} >0 $.

Consider the portion of $S_{\alpha, \beta, \gamma}   (f,g)(x)$ in  the set $\{y \in \H^n : |y| \ll |x| \}$, wherein, one has $|x| \gtrsim |xy|, |x y^{-1}| \geq | \, |x|-|y| \, | \gtrsim |x|$:
\begin{equation}\label{y smaller than x}
 S_{\alpha, \beta, \gamma}   (f,g)(x) \,  \gtrsim \, |x|^{-\gamma_{0} }  \int_{\{y\in \H^n |y| \ll |x|\}}  f(x y^{-1}) g(xy)\,  \frac{dy}{|y|^{Q-\lambda}}   . 
\end{equation}
Take $N \gg 1$, to be specified later, and consider functions
\begin{equation*}
    f(y)= \sum_{a \in \Z^{2n+1} : |a|>e} |a|^{Q(N-1)/p} \, \left( \log |a| \right)^{-2/p} \chi_{E_a}(y), 
\end{equation*}
and 
\begin{equation*}
    g(y)= \sum_{a \in \Z^{2n+1} : |a|>e} |a|^{Q(N-1)/q} \, \left( \log |a| \right)^{-2/q} \chi_{E_a}(y), \ \ y \in \H^n,
\end{equation*}
with $E_a:= a \cdot B(0,r_a)^{2}$, $r_a= \frac{1}{|a|^{N}}$. Observe that $|E_a|=|B(0,r_{a})^2| \sim r_a^{Q} = \frac{1}{|a|^{NQ}  }$ and that $E_{a}, a \in \Z^{2n+1}$,  are disjoint sets. Therefore, $f \in L^p(\H^n)$ and $g \in L^q(\H^n)$.

Define  sets $\Tilde{E_a}:= a \cdot B(0,r_a)$. For $x \in \Tilde{E_a}$ and $y \in B(0,r_a)$, we have $x y^{-1}, \, xy \in a \cdot B(0, r_a)^2 = E_a$.
Therefore, from  (\ref{y smaller than x}),
\begin{equation*}
\begin{split}
   \|S_{ \alpha, \beta, \gamma}(f, g)\|_{L^r(\mathbb{H}^n)}^{r} 
& \gtrsim \sum_{|a|>e} \,  \int_{x \in \Tilde{E_a}}  \left| |x|^{-\gamma_{0} }  \int_{|y| < r_{a}} f(x y^{-1}) g(xy)\,  \frac{dy}{|y|^{Q-\lambda}}    \right|^r dx\\
& \gtrsim \sum_{|a|>e} \,  \int_{x \in \Tilde{E_a}} |x|^{-\gamma_{0} \, r} |a|^{ rQ(N-1) (\frac{1}{p} + \frac{1}{q})     }  \left( \log |a|\right)^{-2r (\frac{1}{p}+ \frac{1}{q} )}  \frac{1}{|a|^{N \lambda r}} dx\\
& \simeq \sum_{|a|>e} \,   |a|^{-\gamma_{0} \, r +  rQ(N-1) (\frac{1}{p} + \frac{1}{q})  - r N \lambda   }  \left( \log |a|\right)^{-2r (\frac{1}{p}+ \frac{1}{q} )}  \frac{1}{|a|^{N Q}} \\
& =: \sum_{|a|>e} \,   |a|^{R}  \left( \log |a|\right)^{-2r (\frac{1}{p}+ \frac{1}{q} )},
\end{split}
\end{equation*}
which diverges if $R \geq -Q$, wherein we have used that the sets  $\Tilde{E_a}, \, a \in \Z^{2n+1}$, are disjoint, and that  for $x \in \Tilde{E_a} = a \cdot B(0, r_{a})$,  $|x| \sim |a|$. Therefore, in view of the homogeneity condition, it suffices to check whether
\begin{equation*}
\begin{split}
 &   -\gamma_{0} \, r +  rQ(N-1) \left(\frac{1}{p} + \frac{1}{q} \right)  - r N \lambda  - NQ \geq -Q\\
 & \iff  -\gamma_{0}  +  Q(N-1) \left(\frac{1}{p} + \frac{1}{q} \right)  - N \lambda   \geq \frac{(N-1)Q}{r} \\
 & \iff  -\gamma_{0}  -  Q \left(\frac{1}{p} + \frac{1}{q} \right)  + QN \left(\frac{1}{p} + \frac{1}{q} - \frac{\lambda}{Q} - \frac{1}{r} \right) +  \frac{Q}{r}  \geq    0 \\
 & \iff  -\gamma_{0}  -  Q \left(\frac{1}{p} + \frac{1}{q} \right)  - \gamma_{0} \,  N +  \frac{Q}{r}  \geq    0,
 \end{split}
\end{equation*}
which is true for $N$ sufficiently large, since $\gamma_{0} < 0$.

\noindent\underline{ \textbf{Necessity of $\frac{1}{r} \leq \frac{1}{p} + \frac{1}{q}$ in \Cref{Power weights C} }}:\\
\noindent On the contrary, let us assume $\frac{1}{r}>\frac{1}{p}+\frac{1}{q}$. Take $f(x):=|x|^{-Q/p} (\log |x|)^{-\tau_{1}}\chi_{\{|x|>16\}}$ and $g(x):=|x|^{-Q/q} (\log |x|)^{-\tau_{2}}\chi_{\{|x|>16\}}$. It is not hard to see that $f\in L^p(\mathbb{H}^n)$ and  $g\in L^q(\mathbb{H}^n)$ provided $\tau_{1}>1/p$ and $\tau_{2}>1/q$, respectively. For  $|x|\gg 1$, we see that
\begin{align*}
S_{\alpha, \beta, \gamma}(f, g)(x)&\gtrsim |x|^{- \gamma}    \int_{B(0, \frac{|x|}{2})}  \frac{f(x y^{-1})}{|xy^{-1}|^{\alpha}} \, \frac{g(xy)}{|xy|^{\beta}}  \, \frac{dy}{|y|^{Q-\lambda}}\\
&\gtrsim |x|^{-(\alpha+\beta+\gamma)}\int_{B(0, \frac{|x|}{2})}  f(x y^{-1}) g(xy) \, \frac{dy}{|y|^{Q-\lambda}}\\
&\gtrsim |x|^{-(\alpha+\beta+\gamma)-Q(\frac{1}{p}+\frac{1}{q})+\lambda} (\log |x|)^{\tau_1+\tau_2}=|x|^{-Q/r}(\log |x|)^{\tau_1+\tau_2}.
\end{align*}
The above implies that $\|S_{ \alpha, \beta, \gamma}(f, g)\|_{L^r(\mathbb{H}^n)}=\infty$ if we choose $\tau_1 > 1/p$ and $\tau_2 > 1/q$ such that $(\tau_1+\tau_2)\leq \frac{1}{r}$ and which is possible thanks to the assumption $\frac{1}{r}>\frac{1}{p}+\frac{1}{q}$. For example one can choose $\tau_1=\frac{1}{p}+\epsilon$ and $\tau_2=\frac{1}{q}+\epsilon$ with $\epsilon>0$ such that $0<2\epsilon<\frac{1}{r}-\frac{1}{p}-\frac{1}{q}$. Thus we arrive at at a contradiction.  
\medskip

\subsection{Bilinear Stein--Weiss Inequality}

As a consequence of Theorem~\ref{Power weights C}, we obtain the following bilinear Stein--Weiss inequality.
\begin{theorem}[Stein-Weiss type inequality]\label{Power weights AB}
Let $1<p,q<\infty$, $0< \lambda <Q$, $0<r<\infty$, and $\frac{1}{r}= \frac{1}{p} + \frac{1}{q}- \frac{\lambda}{Q} >0$. Further, let 
\begin{equation}\label{Easy AB}
\alpha < \frac{Q}{p'}, \ \ \ \beta < \frac{Q}{q'} \ \ \  \text{and}   \ \ \ - \frac{Q}{r} < \alpha + \beta.
\end{equation}
Then, the following are equivalent:
\begin{enumerate}
\item [\emph{(\emph{a})}] There exists constant $K>0$, such that
\begin{equation}
\label{Suuf-Cond-AB}
\| \, |x|^{ \alpha + \beta } B_{\lambda}(f,g) \|_{L^{r}(\mathbb{H}^n  )} \leq K  \| |x|^{ \alpha } f \|_{L^p( \mathbb{H}^n  )} \| |x|^{ \beta }g \|_{L^q( \mathbb{H}^n  )},
\end{equation}
for all $f \in L^{p}(\mathbb{H}^n)$ and $g \in L^{q}(\mathbb{H}^n)$;
\item [\emph{(\emph{b})}] The exponents $\alpha$ and $\beta$ satisfy
\begin{equation}\label{Necessary condn. AB}
\textnormal{(I)} \  \alpha \leq Q- \lambda, \ \ \  \textnormal{(II)} \  \beta \leq Q - \lambda, \ \ \  and  \ \ \ \textnormal{(III)} \  -Q + \lambda \leq \alpha + \beta.
\end{equation}
\end{enumerate}
\end{theorem}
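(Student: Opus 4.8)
The plan is to obtain Theorem~\ref{Power weights AB} as a direct specialization of Theorem~\ref{Power weights C}, by setting $\gamma := -(\alpha+\beta)$. With this choice the left-hand weight $|x|^{-\gamma}$ appearing in \eqref{suff-cond-ABC} becomes exactly $|x|^{\alpha+\beta}$, so that \eqref{suff-cond-ABC} is literally \eqref{Suuf-Cond-AB}. The first task is therefore to verify that every standing hypothesis of Theorem~\ref{Power weights C} is met by the triple $(\alpha,\beta,-(\alpha+\beta))$. The homogeneity relation of Theorem~\ref{Power weights C}, namely $\tfrac{1}{r} = \tfrac{1}{p} + \tfrac{1}{q} - \tfrac{\lambda - \alpha - \beta - \gamma}{Q}$, collapses to $\tfrac{1}{r} = \tfrac{1}{p} + \tfrac{1}{q} - \tfrac{\lambda}{Q}$, since $\lambda - \alpha - \beta - \gamma = \lambda$; this is precisely the homogeneity hypothesis of Theorem~\ref{Power weights AB}. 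Moreover, because $\lambda > 0$, this relation forces $\tfrac{1}{r} < \tfrac{1}{p}+\tfrac{1}{q}$, so the requirement $\tfrac{1}{r} \leq \tfrac{1}{p} + \tfrac{1}{q}$ of Theorem~\ref{Power weights C} is automatic.

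Next I would check the remaining side conditions in \eqref{Easy ABC}. The constraints $\alpha < Q/p'$ and $\beta < Q/q'$ are shared by both statements, while $\gamma < Q/r$ becomes $-(\alpha+\beta) < Q/r$, that is $-\tfrac{Q}{r} < \alpha + \beta$, which is exactly the third condition in \eqref{Easy AB}. Thus all hypotheses of Theorem~\ref{Power weights C} hold, and its conclusion---the equivalence of (a) and (b)---applies verbatim to our chosen triple.

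It then remains to translate the four conditions \eqref{Necessary-condn-ABC} into the three conditions \eqref{Necessary condn. AB}. Substituting $\gamma = -(\alpha+\beta)$: condition (IV) reads $\alpha + \beta + \gamma = 0 \geq 0$ and is therefore always satisfied; condition (I), $-Q + \lambda \leq \beta + \gamma = -\alpha$, is equivalent to $\alpha \leq Q - \lambda$; condition (II), $-Q + \lambda \leq \gamma + \alpha = -\beta$, is equivalent to $\beta \leq Q - \lambda$; and condition (III), $-Q + \lambda \leq \alpha + \beta$, is unchanged. Hence the conjunction of the four conditions in \eqref{Necessary-condn-ABC} is equivalent to the three conditions in \eqref{Necessary condn. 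AB}, and the equivalence (a)$\iff$(b) of Theorem~\ref{Power weights AB} follows immediately from that of Theorem~\ref{Power weights C}.

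Since the argument is a clean specialization, I do not expect a genuine analytic obstacle; the only point requiring care is the bookkeeping that condition (IV) degenerates to the equality $\alpha+\beta+\gamma = 0$ (so it is automatically satisfied and drops out of the list), together with the verification that the homogeneity relation and the range constraint $\gamma < Q/r$ reduce exactly to the hypotheses $\tfrac{1}{r} = \tfrac{1}{p}+\tfrac{1}{q}-\tfrac{\lambda}{Q}$ and $-\tfrac{Q}{r} < \alpha+\beta$ of Theorem~\ref{Power weights AB}.
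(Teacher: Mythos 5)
Your specialization $\gamma := -(\alpha+\beta)$ is exactly how the paper obtains Theorem~\ref{Power weights AB}: it is stated there as a direct consequence of Theorem~\ref{Power weights C}, and your bookkeeping (homogeneity collapsing to $\tfrac1r=\tfrac1p+\tfrac1q-\tfrac{\lambda}{Q}$, $\gamma<Q/r$ becoming $-Q/r<\alpha+\beta$, and condition (IV) degenerating to the identity $\alpha+\beta+\gamma=0$ so that (I)--(III) reduce to $\alpha\le Q-\lambda$, $\beta\le Q-\lambda$, $-Q+\lambda\le\alpha+\beta$) is complete and correct. No gap.
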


\section*{Acknowledgments} 
We are sincerely thankful to Prof. S. Thangavelu for suggesting this direction.

\newcommand{\etalchar}[1]{$^{#1}$}
\providecommand{\bysame}{\leavevmode\hbox to3em{\hrulefill}\thinspace}
\providecommand{\MR}{\relax\ifhmode\unskip\space\fi MR }
\providecommand{\MRhref}[2]{%
  \href{http://www.ams.org/mathscinet-getitem?mr=#1}{#2}
}
\providecommand{\href}[2]{#2}

\end{document}